\documentclass[a4paper,11pt]{article}
\usepackage{amsmath,graphicx,amsthm,latexsym,amssymb,natbib}

\setlength{\textwidth}{6.00in}
\setlength{\oddsidemargin}{-.01in}
\setlength{\topmargin}{-.1in}
  
\setlength{\textheight}{9.00in}   

\newtheorem{theorem}{Theorem}[section]
\newtheorem{theoremm}{Theorem}[subsection]
\newtheorem{fact}[theorem]{Fact}
\newtheorem{factt}[theoremm]{Fact}

\title{The deepest point for distributions \\ in infinite dimensional spaces}
\author{\vspace{0.3in} Anirvan Chakraborty\thanks{Research is partially supported by CSIR SPM Fellowship} ~and Probal Chaudhuri}
\date{}
\begin{document}

\maketitle
\vspace{-0.6in}
\begin{center}
 Theoretical Statistics and Mathematics Unit, \\
 Indian Statistical Institute \\ 
 203, B. T. Road, Kolkata - 700108, INDIA. \\
 emails: anirvan\_r@isical.ac.in, probal@isical.ac.in
\end{center}
\vspace{0.15in}

\begin{abstract}
Identification of the center of a data cloud is one of the basic problems in statistics. One popular choice for such a center is the median, and several versions of median in finite dimensional spaces have been studied in the literature. In particular, medians based on different notions of data depth have been extensively studied by many researchers, who defined median as the point, where the depth function attains its maximum value. In other words, the median is the deepest point in the sample space according to that definition. In this paper, we investigate the deepest point for probability distributions in infinite dimensional spaces. We show that for some well-known depth functions like the band depth and the half-region depth in function spaces, there may not be any meaningful deepest point for many well-known and commonly used probability models. On the other hand, certain modified versions of those depth functions as well as the spatial depth function, which can be defined in any Hilbert space, lead 
to some useful notions of the deepest point with nice geometric and statistical properties. The empirical versions of those deepest points can be conveniently computed for functional data, and we demonstrate this using some simulated and real data sets.
\vspace{0.1in} \\
\textbf{Keywords}: {Breakdown point, coordinatewise median, functional depths, spatial depth, spatial median, strong consistency}
\end{abstract}

\section{Introduction}
\label{1}
For a univariate probability distribution, median is a well-known and popular choice of its center. It has several desirable statistical properties, which include equivariance under monotone transformations, asymptotic consistency under very general conditions and high breakdown point. The concept of median has been extended in several ways for probability distributions in finite dimensional Euclidean spaces (see, e.g., \cite{DC11, Smal90} for some reviews). The median can also be defined as the point in the sample space with the highest depth value with respect to appropriate depth functions (see, e.g., \cite{DG92, LPS99, Smal90, ZS00a}). Due to the recent advances in technology and measurement devices, statisticians frequently have to analyze data for which the number of variables is much larger than the sample sizes. Such data can be conveniently viewed as random observations from probability distributions in infinite dimensional spaces, e.g., the space of real-valued functions defined on an interval. \\ 
\indent It turns out that many medians for finite dimensional probability measures do not extend in any natural and meaningful way into infinite dimensional spaces. On the other hand, an extension of the well-known spatial median (see, e.g., \cite{Brow83}) into general Banach spaces was studied in \cite{Kemp87, Vala84}. There has been some recent work on developing depth functions for probability measures in infinite dimensional spaces. The band depth (see, e.g., \cite{LPR09}) and the half-region depth (see, e.g., \cite{LPR11}) have been defined for data in the space of real-valued continuous functions on an  interval on the real line. The authors of those papers have used these depth functions to find central and extreme curves for several real data sets, and also to construct test procedures based on depth based ranking. Recently, the authors of \cite{SG11} have used the deepest point based on the band depth in the construction of boxplots for functional data. In Section \ref{2}, we shall critically 
investigate the deepest points associated with some of the functional depths. In Section \ref{3}, we shall demonstrate the empirical deepest points using some simulated and real functional data. \\
\indent In the univariate setting, one of the main motivations for considering the median is its robustness against outlying observations. According to the traditional measures of robustness like breakdown point, the median is a more robust estimator of location than the mean. In an interesting paper \cite{Sing93}, the author considered an alternative measure of robustness assuming that all the observations remain bounded. Using that measure, the author of \cite{Sing93} obtained some counter-intuitive results regarding the robustness of the median and the mean (see also \cite{MS97}). In particular, it was shown that the mean may be more robust than the median under certain conditions. Unlike the univariate sample median, many multivariate depth based medians fail to achieve $50\%$ breakdown point (see, e.g., \cite{Smal90} for a review). In fact, in $\mathbb{R}^{d}$ for $d \geq 2$, the half-space median has a breakdown point of $1/3$, while that for the simplicial median is atmost $1/(d+2)$ (see, e.g, \cite{
DC11} for a brief review and relevant references). In Section \ref{4.1}, we will consider the breakdown point of the empirical deepest point based on some functional depths. \\
\indent The strong consistency of the empirical versions of most of the medians for finite dimensional data is well-known in the literature. However, for infinite dimensional data, the situation is much more complex. In some cases, like the medians based on the band depth and the half-region depth, strong consistency has been proved under the assumption that the empirical medians along with the unique population median lie in a fixed equicontinuous set (see \cite{LPR09, LPR11}). For the empirical spatial median in infinite dimensions, the convergence has been shown to hold only in a weak sense, namely, for continuous real linear functions of the estimator (see, e.g., \cite{Cadr01, Gerv08}). Although the author of \cite{Cadr01} proved strong consistency of the empirical spatial median in the norm topology of the Hilbert space $L_{2}(\mathbb{R})$, the result requires extremely strong assumptions on the underlying stochastic process, which include boundedness and differentiability of sample paths. Thus, this 
result is not valid for many important processes including the standard Brownian motions and fractional Brownian motions. Although the author of \cite{Kemp87} considered the spatial median for general Banach spaces, he proved the consistency of its empirical version only in the finite dimensional setting. One of the major reasons for the difficulties in proving consistency results of such estimators in infinite dimensional spaces is the noncompactness of the closed unit ball. As a consequence, some of the standard methods of proof in the finite dimensional setup using Arzela-Ascoli Theorem (see, e.g., the proof as given in \cite{Kemp87}) fail in infinite dimensions. Let us mention here that the strong consistency of an updation based alternative estimator of the spatial median has been proved in separable Hilbert spaces (see \cite{CCZ13}). However, this estimator is very different from the empirical spatial median, and is more suited when data arrive sequentially. In Section \ref{4.2}, we will study the 
asymptotic strong consistency of the empirical deepest points for data lying in infinite dimensional function spaces.

\section{The deepest point}
\label{2}
In this section, we will study the deepest point arising from some of the depth functions that are available for probability distributions in infinite dimensional spaces. The spatial depth of ${\bf x} \in \mathbb{R}^{d}$ with respect to the probability distribution of ${\bf X} \in \mathbb{R}^{d}$ is defined as $SD({\bf x}) = 1 - ||E\{({\bf x} - {\bf X})I({\bf X} \neq {\bf x})/||{\bf x} - {\bf X}||\}||$ (see, e.g., \cite{Serf02}). Recall that the spatial median (see, e.g., \cite{Brow83}), say ${\bf m}_{s}$, of the distribution of ${\bf X}$ is given by the minimizer of the function $E\{||{\bf x} - {\bf X}||\}$ (if $E||{\bf X}|| = \infty$, we can minimize $E\{||{\bf x} - {\bf X}|| - ||{\bf X}||\}$ as suggested in \cite{Kemp87}). The definitions of both of the spatial median and the spatial depth function extend naturally to any Hilbert space, e.g., the function space $L_{2}[0,1]$. It has been proved in \cite{Kemp87} that the spatial median is unique if the Hilbert space is strictly convex, and $F$ is not 
entirely supported on a line (see Theorem 2.17 in that paper). Further, if the distribution of ${\bf X}$ is nonatomic, the unique spatial median is the only point in that Hilbert space, which satisfies $E\{({\bf x} - {\bf X})/||{\bf x} - {\bf X}||\} = {\bf 0}$ (see Theorem 4.14 in \cite{Kemp87}). Summarizing all these, we have the following fact.
\begin{fact}  \label{fact1}
Let ${\bf X}$ be a random element in a strictly convex Hilbert space, and suppose that the probability distribution of ${\bf X}$ is nonatomic and not entirely supported on a line in ${\cal X}$. Then, the unique deepest point associated with the spatial depth function $SD({\bf x})$ is the spatial median of ${\bf X}$, and its spatial depth is $1$.
\end{fact}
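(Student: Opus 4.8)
The plan is to reduce the maximization of the spatial depth function to the characterization of the spatial median already established in \cite{Kemp87}. First I would observe that, since the spatial depth is written as $SD({\bf x}) = 1 - ||E\{({\bf x} - {\bf X})I({\bf X} \neq {\bf x})/||{\bf x} - {\bf X}||\}||$, the norm appearing inside is nonnegative, so $SD({\bf x}) \leq 1$ for every ${\bf x}$, with equality precisely when the expectation $E\{({\bf x} - {\bf X})I({\bf X} \neq {\bf x})/||{\bf x} - {\bf X}||\}$ equals the zero element of the Hilbert space. Thus the deepest point, if it exists, must be a point at which this expectation vanishes, and any such point attains the maximal possible depth value $1$.

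Next, I would use the nonatomicity hypothesis to simplify the expectation. Since the distribution of ${\bf X}$ is nonatomic, $P({\bf X} = {\bf x}) = 0$ for each fixed ${\bf x}$, so the indicator $I({\bf X} \neq {\bf x})$ equals $1$ almost surely, and the defining expectation coincides with $E\{({\bf x} - {\bf X})/||{\bf x} - {\bf X}||\}$. Consequently, the condition $SD({\bf x}) = 1$ is equivalent to $E\{({\bf x} - {\bf X})/||{\bf x} - {\bf X}||\} = {\bf 0}$.

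At this point the result would follow by invoking the two cited theorems of \cite{Kemp87}. Theorem 2.17 there guarantees that, under strict convexity of the Hilbert space together with the assumption that ${\bf X}$ is not supported on a line, the spatial median ${\bf m}_{s}$ exists and is unique. Theorem 4.14 then asserts that, under nonatomicity, ${\bf m}_{s}$ is the \emph{only} point satisfying $E\{({\bf x} - {\bf X})/||{\bf x} - {\bf X}||\} = {\bf 0}$. Combining these with the equivalence established above, the spatial depth attains its maximal value $1$ at ${\bf x} = {\bf m}_{s}$ and nowhere else, which simultaneously yields uniqueness of the deepest point, its identification with the spatial median, and the value $1$ of its depth.

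I do not anticipate a genuine analytic obstacle, since the hard work of establishing uniqueness and the gradient characterization of the spatial median has already been carried out in \cite{Kemp87}. The only point requiring a little care is the reduction step: one must confirm that the expectation in the definition of $SD$ is well-defined (its integrand has norm bounded by $1$, so this is immediate) and that dropping the indicator is legitimate, which is exactly where nonatomicity enters. The essence of the argument is the recognition that maximizing the spatial depth is precisely the same as solving the first-order stationarity equation characterizing the spatial median.
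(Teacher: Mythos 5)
Your proposal is correct and follows exactly the route the paper itself takes: the paper presents Fact \ref{fact1} as a summary of Theorems 2.17 and 4.14 of \cite{Kemp87}, with the same implicit observations you make explicit (that $SD({\bf x}) \leq 1$ with equality iff the expectation vanishes, and that nonatomicity lets one drop the indicator so the depth-$1$ condition becomes the first-order characterization of the spatial median). Your write-up simply spells out the small reduction steps the paper leaves implicit; there is no substantive difference.
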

Let us also mention here that an alternative definition of spatial depth in $\mathbb{R}^{d}$ was considered by the authors of \cite{VZ00}. They defined the spatial depth of ${\bf x} \in \mathbb{R}^{d}$ as $1 - \inf\{w \geq 0 : \mbox{spatial median of} \ (w\delta_{{\bf x}} + F)/(1+w) = {\bf x}\}$. Here $\delta_{{\bf x}}$ denotes the point mass at ${\bf x}$. It can be shown using the characterization of spatial median given in Theorem 4.14 in \cite{Kemp87} that one gets $1 - \max\{0,1 - SD({\bf x}) - p({\bf x})\}$ as the depth of ${\bf x}$ according to this definition. Here $p({\bf x})$ denotes the mass at ${\bf x}$ under the distribution of ${\bf X}$. So, this  definition of spatial depth coincides with the previous definition if $F$ is nonatomic. \\
\indent Infinite dimensional data are often obtained as realizations of functions, e.g., spectrometric data, electrocardiogram records of patients, stock price data, various meteorological data like temperature, rainfall etc. In that case, one may want to exploit the functional nature of the data in constructing depth functions. As mentioned in Section \ref{1}, the band depth and the half-region depth were introduced in the literature for such data. The band depth (BD) of ${\bf x} = \{x_{t}\}_{t \in [0,1]} \in C[0,1]$ relative to the distribution of a random element ${\bf X} = \{X_{t}\}_{t \in [0,1]} \in C[0,1]$ is given by 
\begin{eqnarray*}
BD({\bf x}) = \sum_{j=2}^{J} P\left(\min_{1 \leq i \leq j} X_{i,t} \leq x_{t} \leq \max_{1 \leq i \leq j} X_{i,t} \ \forall \ t \in [0,1]\right),
\end{eqnarray*}
where for $1 \leq i \leq J$, ${\bf X}_{i} = \{X_{i,t}\}_{t \in [0,1]}$ are independent copies of ${\bf X}$. The half-region depth (HRD) of ${\bf x}$ is given by 
\begin{eqnarray*}
HRD({\bf x}) = \min \left\{P\left(X_{t} \leq x_{t} \ \forall \ t \in [0,1]\right), \ P\left(X_{t} \geq x_{t} \ \forall \ t \in [0,1]\right)\right\}.
\end{eqnarray*}
\indent However, as we shall see in the next theorem, for many well-known and commonly used stochastic models for functional data, there is no meaningful notion of deepest points associated with these two  depth functions. These stochastic models include Feller processes in $C[0,1]$. Feller processes are a class of strong Markov processes, whose transition probability function satisfies certain continuity properties (see, e.g., \cite{RY91} for a detailed discussion). Important examples of Feller processes include Brownian motions, Brownian bridges etc. Feller processes have been used for modelling data in physical and biological sciences (see, e.g., \cite{Bott10} for a review and related references).
\begin{theorem} \label{thm1}
Let ${\bf X}$ be a Feller process in $C[0,1]$ starting at $x_{0} \in \mathbb{R}$, which is symmetric about ${\bf a} = \{a_{t}\}_{t \in [0,1]} \in C[0,1]$, i.e., the distribution of ${\bf X} - {\bf a}$ is same as that of ${\bf a} - {\bf X}$. Also, assume that $X_{t}$ has a continuous distribution for all $t \in (0,1]$ with any finite dimensional marginal $(X_{t_{1}},X_{t_{2}},\ldots,X_{t_{d}})$ of ${\bf X}$ having a positive density in a neighbourhood of its center of symmetry $(a_{t_{1}},a_{t_{2}},\ldots,a_{t_{d}})$, where $0 < t_{1} < t_{2} < \ldots < t_{d} \leq 1$. Then, the band depth vanishes identically on $C[0,1]$. Moreover, the half-region depth of ${\bf a}$ is zero.
\end{theorem}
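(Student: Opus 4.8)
The plan is to localize the entire argument at the starting time $t=0$ and to exploit Blumenthal's $0$--$1$ law, which is available for any Feller process. First I would record the elementary but essential fact that $a_0 = x_0$: since ${\bf X}$ starts at $x_0$ and ${\bf X} - {\bf a} \stackrel{d}{=} {\bf a} - {\bf X}$, the (deterministic) coordinate at $t=0$ forces $x_0 - a_0 = a_0 - x_0$. Thus the process starts exactly on the curve ${\bf a}$, which is precisely what makes the germ $\sigma$-field $\mathcal{F}_{0+}$ the right object to study, and which lets symmetry and continuity of the marginals be converted into statements about oscillation at the origin.

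For the half-region depth I would set $W_t = X_t - a_t$, a continuous process with $W_0 = 0$ generating the same filtration as ${\bf X}$, and introduce the germ event $E_0 = \{\, W_t \ge 0 \text{ for all } t \in [0,\epsilon] \text{ for some } \epsilon > 0 \,\}$. The structural point is that $\{W_t \ge 0 \ \forall t \in [0,1]\} \subseteq E_0$ and that $E_0 \in \mathcal{F}_{0+}$: because the defining family is decreasing in $\epsilon$, $E_0$ can be recovered from the path on $[0,\delta]$ for every $\delta > 0$. Blumenthal's $0$--$1$ law then gives $P(E_0) \in \{0,1\}$. To rule out $P(E_0)=1$ I would use symmetry together with continuity of the one-dimensional marginals, which give $P(W_t > 0) = P(W_t < 0) = 1/2$ for $t \in (0,1]$; hence the complementary germ event $\{W_t < 0 \text{ for some } t \text{ in every right-neighbourhood of } 0\}$, being disjoint from $E_0$, has probability at least $\limsup_{t \downarrow 0} P(W_t < 0) = 1/2 > 0$. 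Therefore $P(E_0)=0$, so both $P(X_t \ge a_t \ \forall t)$ and, by the mirror argument, $P(X_t \le a_t \ \forall t)$ vanish, giving $HRD({\bf a}) = 0$.

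For the band depth I would run the same scheme on the $j$-fold product process $(X_{1,t},\dots,X_{j,t})$, which is again Feller and starts at $(x_0,\dots,x_0)$, for each $2 \le j \le J$. If the fixed curve satisfies $x^{\mathrm{curve}}_0 \neq x_0$, the band at $t=0$ degenerates to the single point $x_0$ and the defining event is empty; so I may assume $x^{\mathrm{curve}}_0 = x_0$ and set $Y_{i,t} = X_{i,t} - x_t$. The event $A_j$ is contained in the germ event $E_0^{(j)} = \{\text{the band contains } {\bf x} \text{ on some } [0,\epsilon]\} \in \mathcal{F}_{0+}$, so $P(E_0^{(j)}) \in \{0,1\}$. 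To force it to $0$, I would note that $\max\{P(X_{1,t} > x_t),\, P(X_{1,t} < x_t)\} \ge 1/2$ by continuity of the marginal, whence by independence one of $\limsup_{t \downarrow 0} P(\text{all } j \text{ copies strictly above } x_t)$ or the analogous ``all below'' limsup is at least $(1/2)^j$. Either associated germ event is disjoint from $E_0^{(j)}$, so $P(E_0^{(j)}) \le 1 - (1/2)^j < 1$, whence $P(E_0^{(j)}) = 0$ and $P(A_j)=0$. Summing over $j$ gives $BD({\bf x}) = 0$ for every ${\bf x}$.

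The main obstacle, and the step I would treat most carefully, is the rigorous verification that $E_0$, $E_0^{(j)}$ and their complementary ``infinitely-often'' counterparts genuinely lie in the germ field $\mathcal{F}_{0+}$ and are measurable; this uses path-continuity to replace the uncountable unions and intersections over $t$ by countable ones over a dense set, and the monotonicity in $\epsilon$ to show independence of the truncation level $\delta$. I would also need to confirm that Blumenthal's $0$--$1$ law applies in the precise form required, i.e. that ${\bf X}$ and its product are Feller processes with trivial germ field at the fixed starting point. It is here that the continuity-and-positive-density hypotheses do their real work: they guarantee the non-degeneracy that keeps the one-sided marginal probabilities bounded below by $1/2$ and $(1/2)^j$, and thus place the relevant germ event on the ``escape'' branch of the $0$--$1$ law rather than the ``trapped'' branch (the degenerate process $X_t = a_t + Z\,t$, whose finite-dimensional marginals are supported on a line and so lack the assumed density, shows that this hypothesis cannot be dropped).
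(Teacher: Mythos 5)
Your proposal is correct, and its central tool is the same as the paper's: Blumenthal's $0$--$1$ law applied at $t=0$ to the $j$-fold product process (which is again Feller), combined with a $2^{-j}$ lower bound on one-sided probabilities coming from continuity of the marginals; the paper invokes exactly this through Propositions 2.16 and 2.17 of \cite{RY91}. For the half-region depth your argument is the mirror image of the paper's: the paper shows the escape times $T_{1}=\inf\{t>0: X_{t}>a_{t}\}$ and $S_{1}=\inf\{t>0: X_{t}<a_{t}\}$ are zero almost surely (their germ events have probability at least $1/2$, hence equal to $1$), while you show the complementary ``trapped'' germ event $E_{0}$ has probability at most $1/2$, hence equal to $0$; these are logically equivalent. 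Where you genuinely depart from the paper is the band depth. The paper first proves $BD({\bf y})\leq BD({\bf a})$ for every ${\bf y}$ with $y_{0}=x_{0}$, via dyadic discretization, monotone limits using the a.s.\ uniform continuity of the sample paths, and part 2 of Theorem 1 in \cite{LPR09} (the finite-dimensional band depth is maximized at the center of symmetry) --- this reduction is precisely where the symmetry and the positive-density assumption on the finite-dimensional marginals enter --- and only afterwards kills $BD({\bf a})$ with the $0$--$1$ law. You bypass that reduction entirely: applying the germ-field argument directly to an arbitrary curve, you only need that continuity of the one-dimensional marginal forces $\max\{P(X_{1,t}>x_{t}),P(X_{1,t}<x_{t})\}\geq 1/2$, so one of the ``all above''/``all below'' events has probability at least $2^{-j}$ at arbitrarily small times. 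Your route is shorter, self-contained (no appeal to the finite-dimensional depth theory), and in fact shows that the band-depth conclusion requires neither symmetry nor the density hypothesis; what the paper's route buys is the intermediate monotonicity statement $BD({\bf y})\leq BD({\bf a})$ and the reuse of known finite-dimensional results.

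Two points should be tightened. First, the bound $P(\{W_{t}<0 \mbox{ for some } t \mbox{ in every right-neighbourhood of } 0\}) \geq \limsup_{t\downarrow 0}P(W_{t}<0)$ does not follow from disjointness from $E_{0}$; it follows from the reverse Fatou lemma applied along a sequence $t_{n}\downarrow 0$ realizing the limsup, since $\limsup_{n}\{W_{t_{n}}<0\}$ is contained in your escape event. (Alternatively, avoid it altogether: $P(E_{0})=\lim_{\epsilon\downarrow 0}P(W_{t}\geq 0\ \forall\, t\in[0,\epsilon]) \leq \limsup_{t\downarrow 0}P(W_{t}\geq 0)=1/2<1$.) Second, your closing parenthetical is off: the process $X_{t}=a_{t}+Zt$ is not a time-homogeneous Feller process --- its germ $\sigma$-field determines $Z$, so Blumenthal's law fails for it --- hence it cannot witness indispensability of the density hypothesis; indeed, your own argument proves $BD\equiv 0$ without that hypothesis, which enters the paper's proof only through the reduction step you avoided.
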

\begin{proof}[Proof of Theorem \ref{thm1}]
Since $X_{0} = x_{0}$ with probability one, if ${\bf y} = \{y_{t}\}_{t \in [0,1]} \in C[0,1]$ is such that $y_{0} \neq x_{0}$, then ${\bf y}$ cannot be contained in any band formed by the ${\bf X}_{i}$'s. Consequently, $BD({\bf y}) = 0$. So, it is enough to prove the result for any ${\bf y} = \{y_{t}\}_{t \in [0,1]} \in C[0,1]$ satisfying $y_{0} = x_{0}$. Let ${\bf y}_{m} = (y_{1/2^{m}},y_{2/2^{m}},\ldots,y_{2^{m}/2^{m}})$. It follows from part 2 of Theorem 1 in \cite{LPR09} that $BD_{m}({\bf y}_{m}) \leq BD_{m}({\bf a}_{m})$ for all $m \geq 1$, where the function $BD_{m}$ is the $m$-dimensional band depth calculated using the distribution of ${\bf X}_{m} = (X_{1/2^{m}},X_{2/2^{m}},\ldots,X_{2^{m}/2^{m}})$. Since
\begin{eqnarray*}
&& \{\min_{1 \leq i \leq j} X_{i,k/2^{m+1}} \leq x_{k/2^{m+1}} \leq \max_{1 \leq i \leq j} X_{i,k/2^{m+1}} \ \forall \ k = 1,\ldots,2^{m+1}\} \\
&\subseteq& \{\min_{1 \leq i \leq j} X_{i,k/2^{m}} \leq x_{k/2^{m}} \leq \max_{1 \leq i \leq j} X_{i,k/2^{m}} \ \forall \ k = 1,\ldots,2^{m}\} 
\end{eqnarray*}
for all $1 \leq j \leq J$, it follows that $BD_{m+1}({\bf y}_{m+1}) \leq BD_{m}({\bf y}_{m})$ for all $m \geq 1$. So,
\begin{eqnarray*}
&& \lim_{m \rightarrow \infty} BD_{m}({\bf y}_{m}) \\
&=& \sum_{j=2}^{J} P\left(\min_{1 \leq i \leq j} X_{i,k/2^{m}} \leq x_{k/2^{m}} \leq \max_{1 \leq i \leq j} X_{i,k/2^{m}} \ \forall \ k = 1,\ldots,2^{m} \ \mbox{and} \ m \geq 1\right)
\end{eqnarray*}
Then, using the almost sure uniform continuity of the sample paths, we have
\begin{eqnarray*}
BD({\bf y}) = \lim_{m \rightarrow \infty} BD_{m}({\bf y}_{m}) \leq \lim_{m \rightarrow \infty} BD_{m}({\bf a}_{m}) = BD({\bf a}).
\end{eqnarray*}
The proof will be complete if we show that $BD({\bf a}) = 0$. Let us now consider the multivariate Feller process $\{(X_{1,t},X_{2,t},\ldots,X_{j,t})\}_{t \in [0,1]}$ for $1 \leq j \leq J$. Define ${\bf Y} = \{Y_{t}\}_{t \in [0,1]} = {\bf X} - {\bf a}$. Since $\{X_{t}\}_{t \in [0,1]}$ is a Feller process starting at $x_{0} = a_{0}$ and symmetric about ${\bf a}$, ${\bf Y}$ is a Feller process starting at $0$ and symmetric about ${\bf 0}$. Let $T_{j} = \inf\{t > 0 : \min_{1 \leq i \leq j} X_{i,t} > a_{t}\} = \inf\{t > 0 : \min_{1 \leq i \leq j} Y_{i,t} > 0\}$ and $S_{j} = \inf\{t > 0 : \max_{1 \leq i \leq j} X_{i,t} < a_{t}\} = \inf\{t > 0 : \max_{1 \leq i \leq j} Y_{i,t} < 0\}$. From the continuity of the sample paths and using Propositions 2.16 and 2.17 in \cite{RY91}, we get that $P(T_{j} = 0) = 0$ or $1$ and $P(S_{j} = 0) = 0$ or $1$ for all $1 \leq j \leq J$. Since $P(T_{j} = 0) = \lim_{t \downarrow 0} P(T_{j} \leq t) \geq 2^{-j}$ and $P(S_{j} = 0) = \lim_{t \downarrow 0} P(S_{j} \leq t) \geq 2^{-j}$, we 
have $P(T_{j} = 0) = P(S_{j} = 0) = 1$ for all $1 \leq j \leq J$.  \\
The proof of the fact that the half-region depth of ${\bf a}$ is $0$ follows from the above arguments after taking $J = 1$. 
\end{proof}
Since any reasonable depth function should assign maximum value to the point of symmetry of a distribution, and it should preferably be the unique maximizer, it follows from the above theorem that neither of these two depth functions yields any useful notion of deepest point for such stochastic processes. It will be appropriate to note here that it was proved in \cite{CC12} that both of the band depth and the half-region depth vanish on a set of measure one for a class of Feller processes in $C[0,1]$. The assumptions of symmetry and the existence of densities of all the finite dimensional marginals of ${\bf X}$ yield a stronger result for the band depth here. \\
\indent The previous theorem provides a mathematical explanation for an observation made in \cite{LPR09, LPR11} that both of the band depth and the half-region depth tend to take small values if the sample paths cross each other often. This led the authors of those papers to consider modified versions of these depth functions, called modified band depth (MBD) and modified half-region depth (MHRD), respectively. The definitions of both of these depths directly extend to random elements in the space, say ${\cal F}(I)$, of real-valued functions defined on a set $I$. Here, for an appropriate $\sigma$-field ${\cal I}$ and a probability measure $\Lambda$ on $I$, we consider the probability space $(I,{\cal I},\Lambda)$. For example, $I$ can be $[0,1]$ with ${\cal I}$ as its Borel $\sigma$-field and $\Lambda$ as the Lebesgue measure on $[0,1]$. The modified band depth of ${\bf x} \in {\cal F}(I)$ relative to the distribution of ${\bf X} = \{X_{{\bf t}}\}_{{\bf t} \in I} \in {\cal F}(I)$ is defined as 
\begin{eqnarray*}
MBD({\bf x}) = \sum_{j=2}^{J} E\left[\int_{I} I\left(\min_{i=1,\ldots,j} X_{i,{\bf t}} \leq x_{{\bf t}} \leq \max_{i=1,\ldots,j} X_{i,{\bf t}}\right) \Lambda(d{\bf t})\right], 
\end{eqnarray*}
where ${\bf X}_{i} = \{X_{i,{\bf t}}\}_{{\bf t} \in I}$ are $J$ independent copies of ${\bf X}$. A closely related notion of depth, called integrated data depth (IDD) \cite{FM01}, is defined on ${\cal F}(I)$ as $IDD({\bf x}) = \int_{I} D_{{\bf t}}(x_{{\bf t}}) \Lambda(d{\bf t})$, where $D_{{\bf  t}}$'s are depth functions on the real line. Henceforth, we shall assume that for each ${\bf t} \in I$, $D_{{\bf t}}$ is maximized at the median of $X_{{\bf t}}$ if $X_{{\bf t}}$ has a continuous distribution. The modified half-region depth of ${\bf x}$ relative to the distribution of ${\bf X}$ is given by
\begin{eqnarray*}
MHRD({\bf x}) = \min\left\{E\left[\int_{I} I(X_{{\bf t}} \leq x_{{\bf t}}) \Lambda(d{\bf t})\right], \ E\left[\int_{I} I(X_{{\bf t}} \geq x_{{\bf t}}) \Lambda(d{\bf t})\right]\right\}.
\end{eqnarray*}
The next result gives a description of points, which maximize the above three depth functions. For this, let us write ${\bf m}_{c} = \{m_{{\bf t}}\}_{{\bf t} \in I}$, where $m_{{\bf t}}$ denotes the median of $X_{{\bf t}}$ for ${\bf t} \in I$.  
\begin{theorem} \label{thm2}
Let ${\bf X}$ be a random element in ${\cal F}(I)$ such that $X_{{\bf t}}$ has a continuous distribution, which is strictly increasing in a neighbourhood of $m_{{\bf t}}$ for each ${\bf t} \in I$. Then, ${\bf m}_{c}$ is a maximizer of the modified band depth, the modified half-region depth and the integrated data depth. Any ${\bf m}^{*}_{c}$, which equals ${\bf m}_{c}$ for ${\bf t} \in I$ except on a set of $\Lambda$-measure zero, is also a maximizer of the modified band depth and the integrated data depth. Moreover, any ${\bf m}^{**}_{c} = \{m^{**}_{{\bf t}}\}_{{\bf t} \in I}$ satisfying $\int_{I} F_{{\bf t}}(m^{**}_{{\bf t}}) \Lambda(d{\bf t}) = 1/2$ is a maximizer of the modified half-region depth.
\end{theorem}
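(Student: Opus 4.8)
The plan is to exploit the fact that all three depth functions are averages over $I$ (against the probability measure $\Lambda$) of one-dimensional quantities, so that maximizing them reduces to a pointwise maximization at each $\mathbf{t} \in I$. First I would invoke Tonelli's theorem — legitimate since every integrand is a bounded nonnegative indicator — to interchange the expectation, the sum over $j$, and the $\Lambda$-integral. This rewrites the modified band depth as $MBD(\mathbf{x}) = \int_I g_{\mathbf{t}}(x_{\mathbf{t}}) \, \Lambda(d\mathbf{t})$, where $g_{\mathbf{t}}(x) = \sum_{j=2}^{J} P(\min_{1\le i\le j} X_{i,\mathbf{t}} \le x \le \max_{1\le i\le j} X_{i,\mathbf{t}})$ is exactly the univariate band depth of $x$ with respect to $X_{\mathbf{t}}$; and it rewrites the modified half-region depth as $MHRD(\mathbf{x}) = \min\{A(\mathbf{x}), 1 - A(\mathbf{x})\}$ with $A(\mathbf{x}) = \int_I F_{\mathbf{t}}(x_{\mathbf{t}}) \, \Lambda(d\mathbf{t})$, using continuity of each $X_{\mathbf{t}}$ to replace $P(X_{\mathbf{t}} \ge x_{\mathbf{t}})$ by $1 - F_{\mathbf{t}}(x_{\mathbf{t}})$ and the fact that $\int_I \Lambda(d\mathbf{t}) = 1$. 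The integrated data depth is already presented in this integrated form.

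Next, for the modified band depth and the integrated data depth I would carry out the pointwise maximization. For $IDD$ this is immediate from the standing assumption that each $D_{\mathbf{t}}$ is maximized at $m_{\mathbf{t}}$. For $MBD$ I would compute, using continuity of the distribution, $g_{\mathbf{t}}(x) = \sum_{j=2}^{J}\{1 - (1 - F_{\mathbf{t}}(x))^{j} - F_{\mathbf{t}}(x)^{j}\}$; writing $u = F_{\mathbf{t}}(x)$, the summand $1 - (1-u)^{j} - u^{j}$ is symmetric about $u = 1/2$ and strictly increasing on $(0,1/2)$, so $g_{\mathbf{t}}$ is maximized precisely where $F_{\mathbf{t}}(x) = 1/2$, that is, at $x = m_{\mathbf{t}}$ (this is also part 2 of Theorem 1 in \cite{LPR09}). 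Hence $g_{\mathbf{t}}(x_{\mathbf{t}}) \le g_{\mathbf{t}}(m_{\mathbf{t}})$ and $D_{\mathbf{t}}(x_{\mathbf{t}}) \le D_{\mathbf{t}}(m_{\mathbf{t}})$ for every $\mathbf{t}$; integrating against $\Lambda$ gives $MBD(\mathbf{x}) \le MBD(\mathbf{m}_c)$ and $IDD(\mathbf{x}) \le IDD(\mathbf{m}_c)$. The statement about $\mathbf{m}^{*}_c$ then follows at once: if $\mathbf{m}^{*}_c$ agrees with $\mathbf{m}_c$ off a $\Lambda$-null set, the two integrands coincide $\Lambda$-almost everywhere, so the depth values are equal and $\mathbf{m}^{*}_c$ is again a maximizer.

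For the modified half-region depth I would use the scalar reduction $MHRD(\mathbf{x}) = \min\{A(\mathbf{x}), 1 - A(\mathbf{x})\}$. Since $\min\{a, 1-a\} \le 1/2$ with equality if and only if $a = 1/2$, the depth is maximized exactly when $A(\mathbf{x}) = 1/2$. Because each $X_{\mathbf{t}}$ has a continuous distribution, $F_{\mathbf{t}}(m_{\mathbf{t}}) = 1/2$, whence $A(\mathbf{m}_c) = \int_I \tfrac{1}{2} \, \Lambda(d\mathbf{t}) = 1/2$, so $\mathbf{m}_c$ attains the maximal value $1/2$; and any $\mathbf{m}^{**}_c$ with $\int_I F_{\mathbf{t}}(m^{**}_{\mathbf{t}}) \, \Lambda(d\mathbf{t}) = 1/2$ satisfies $A(\mathbf{m}^{**}_c) = 1/2$ and is likewise a maximizer.

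I expect the only genuinely delicate points to be bookkeeping rather than ideas: verifying the joint measurability needed to apply Tonelli and to guarantee that $\mathbf{t} \mapsto m_{\mathbf{t}}$ is a bona fide element of $\mathcal{F}(I)$, and checking that the continuity assumption really does let me drop the point-mass terms $P(X_{\mathbf{t}} = x_{\mathbf{t}})$ throughout. The role of the strict-monotonicity hypothesis is precisely to make $m_{\mathbf{t}}$ the unique pointwise maximizer, so that the median is unambiguously defined and the characterization can be phrased modulo $\Lambda$-null modifications; the maximization itself is an elementary one-dimensional fact in each of the three cases.
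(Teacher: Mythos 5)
Your proof is correct and takes essentially the same route as the paper's: both use Fubini/Tonelli to reduce each depth to a pointwise-in-$\mathbf{t}$ maximization, identify $F_{\mathbf{t}}(x_{\mathbf{t}})=1/2$ (i.e., $x_{\mathbf{t}}=m_{\mathbf{t}}$) as the pointwise optimum for the modified band depth and the integrated data depth, and reduce the modified half-region depth to the scalar condition $\int_{I} F_{\mathbf{t}}(x_{\mathbf{t}})\,\Lambda(d\mathbf{t})=1/2$. Your explicit symmetry-and-monotonicity argument for $1-(1-u)^{j}-u^{j}$ simply spells out what the paper dismisses as ``standard calculus,'' so there is no substantive difference.
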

It is clear that ${\bf m}^{**}_{c}$, which satisfies $\int_{I} F_{{\bf t}}(m^{**}_{{\bf t}}) \Lambda(d{\bf t}) = 1/2$, may differ from ${\bf m}_{c}$ on a set of positive $\Lambda$-measure unlike ${\bf m}^{*}_{c}$. Moreover, although ${\bf m}^{**}_{{\bf c}}$ is a maximizer of the modified half-region depth, its components $m^{**}_{{\bf t}}$ may be far from being univariate medians. For instance, let us consider a standard Brownian motion ${\bf X} = \{X_{t}\}_{t \in [0,1]}$. Define the function ${\bf f} = \{f_{t}\}_{t \in [0,1]}$, where $f_{t}$ is the $\alpha^{th}$ percentile of $X_{t}$ for $t \in [0,1/2)$, and $f_{t}$ is the $(1 - \alpha)^{th}$ percentile of $X_{t}$ for $t \in [1/2,1]$. Here $\alpha \in (0,1)$ can be as small or as large as we like. Then, any such ${\bf f}$ is a maximizer of the modified half-region depth.
\begin{proof}[Proof of Theorem \ref{thm2}]
From the definition of modified band depth we have
\begin{eqnarray}
MBD({\bf x}) &=& \sum_{j=2}^{J} E\left[\int_{I} I\left(\min_{i=1,\ldots,j} X_{i,{\bf t}} \leq x_{{\bf t}} \leq \max_{i=1,\ldots,j} X_{i,{\bf t}}\right) \Lambda(d{\bf t})\right]    \nonumber \\
&=& \sum_{j=2}^{J} \left[\int_{I} P\left(\min_{i=1,\ldots,j} X_{i,{\bf t}} \leq x_{{\bf t}} \leq \max_{i=1,\ldots,j} X_{i,{\bf t}}\right) \Lambda(d{\bf t})\right]  \nonumber \\
&=& \sum_{j=2}^{J} \int_{I} \left[1 - F_{{\bf t}}^{j}(x_{{\bf t}}) - (1 - F_{{\bf t}}(x_{{\bf t}}))^{j}\right] \Lambda(d{\bf  t}),  \label{eq2.3.1} 
\end{eqnarray}
where the second equality in the above follows from Fubini's theorem. Here $F_{{\bf t}}$ denotes the distribution of $X_{{\bf t}}$ for ${\bf t} \in I$. For each ${\bf t} \in I$, the integrand in (\ref{eq2.3.1}) is maximized iff $F_{{\bf t}}(x_{{\bf t}}) = 1/2$, which can be easily verified using standard calculus. This implies that the term in the right hand side of equation (\ref{eq2.3.1}) is maximized iff $F_{{\bf t}}(x_{{\bf t}}) = 1/2$ for all ${\bf t} \in I$, except perhaps on a subset of $I$ with $\Lambda$-measure zero. Hence, the modified band depth is maximized at ${\bf m}_{c}$, and also at any ${\bf m}^{*}_{c}$, which equals ${\bf m}_{c}$ outside a $\Lambda$-null set. \\
\indent Since $D_{{\bf t}}$ is maximized at $m_{{\bf t}}$ for each ${\bf t} \in I$, it follows that the integrated data depth function $\int_{I} D_{{\bf t}}(x_{{\bf t}}) \Lambda(d{\bf t})$ is maximized at ${\bf m}_{c}$, or at any ${\bf m}^{*}_{c}$, which equals ${\bf m}_{c}$ except on a $\Lambda$-null set. \\
\indent For the modified half-region depth, we have using Fubini's theorem
\begin{eqnarray}
MHRD({\bf x}) &=& \min\left\{E\left[\int_{I} I(X_{{\bf t}} \leq x_{{\bf t}}) \Lambda(d{\bf t})\right], \ E\left[\int_{I} I(X_{{\bf t}} \geq x_{{\bf t}}) \Lambda(d{\bf t})\right]\right\}    \nonumber \\
&=& \min\left\{\left[\int_{I} P(X_{{\bf t}} \leq x_{{\bf t}}) \Lambda(d{\bf t})\right], \ \left[\int_{I} P(X_{{\bf t}} \geq x_{{\bf t}}) \Lambda(d{\bf t})\right]\right\}    \nonumber \\
&=& \min\left\{\int_{I} F_{{\bf t}}(x_{{\bf t}}) \Lambda(d{\bf t}), \ 1 - \int_{I} F_{{\bf t}}(x_{{\bf t}}) \Lambda(d{\bf t})\right\}.   \label{eq2.3.2}
\end{eqnarray}
The maximum value of the right hand side of (\ref{eq2.3.2}) is $1/2$. Since $F_{{\bf t}}(m_{{\bf t}}) = 1/2$ for all ${\bf t} \in I$, ${\bf m}_{c}$ is a maximizer of the modified half-region depth. Further, any ${\bf m}^{**}_{c} = \{m^{**}_{{\bf t}}\}_{{\bf t} \in I}$ satisfying $\int_{I} F_{{\bf t}}(m^{**}_{{\bf t}}) \Lambda(d{\bf t}) = 1/2$ will also maximize the modified half-region depth.
\end{proof}

\section{Computation and data analytic demonstration of the empirical deepest point}
\label{3}
We shall now consider empirical versions of the two deepest points discussed in the previous section. Suppose that ${\bf X}_{i}$, $1 \leq i \leq n$, are i.i.d observations from a distribution in a Hilbert space, e.g., the function space $L_{2}[0,1]$. The empirical version of ${\bf m}_{s}$, say $\widehat{{\bf m}}_{s}$, is given by the minimizer of the function $n^{-1} \sum_{i=1}^{n} ||{\bf x} - {\bf X}_{i}||$, where $||.||$ is the norm in that Hilbert space. For the empirical version of ${\bf m}_{c}$, suppose that ${\bf X}_{i} = \{X_{i,{\bf t}}\}_{{\bf t} \in I}$, $1 \leq i \leq n$, are i.i.d. observations from a probability distribution in the space ${\cal F}(I)$. Then, the natural estimator of ${\bf m}_{c}$ is $\widehat{{\bf m}}_{c} = \{\widehat{m}_{c,{\bf t}}\}_{{\bf t} \in I}$, where $\widehat{m}_{c,{\bf t}}$ is the median corresponding to the empirical distribution of $X_{{\bf t}}$. Throughout this paper, we shall use the conventional definition of median, i.e., if $n$ is even, $\widehat{m}_{{\bf t}} = (
X_{(n/2),{\bf t}} + X_{(n/2 + 1),{\bf t}})/2$, and if $n$ is odd, $\widehat{m}_{{\bf t}} = X_{((n+1)/2),{\bf t}}$. The empirical coordinatewise median $\widehat{{\bf m}}_{c}$ is easy to compute. Although there are some algorithms for computing the empirical spatial median $\widehat{{\bf m}}_{s}$ for data in finite dimensional spaces (see, e.g., \cite{BZ79, VZ00}), some of them, which involve the inverse of the Hessian matrix of the function being optimized, cannot be used in the infinite dimensional setting. This is due to the noninvertibility of a Hessian matrix, which occurs when the data dimension exceeds the sample size. It is known that the empirical spatial median always lies in the closed convex hull generated by the sample points (see, e.g., Remark 4.20 in \cite{Kemp87}). This property of the empirical spatial median has been used in \cite{Gerv08} to reduce the infinite dimensional minimization problem associated with the empirical spatial median to a convex minimization over the compact set $\{\sum_{k=1}^{n} \alpha_{k} = 1 : \alpha_{k} \in [0,1]\}$, when there are $n$ data points.  \\
\indent We now demonstrate the empirical deepest points using some simulated and real data sets. For both the simulated and the real data sets, the empirical spatial median is computed using the iterative algorithm proposed in \cite{Gerv08}. Each of the simulated data sets that we consider has $10$ observations. One set is generated from the standard Brownian motion in $[0,1]$, and the two other sets are generated from fractional Brownian motions in $[0,1]$ with Hurst indices $H = 0.3$ and $H = 0.7$. Recall that a fractional Brownian motion with Hurst index $H \in (0,1)$ is a zero mean Gaussian process having covariance kernel $K(t,s) = 0.5(t^{2H} + s^{2H} - |t-s|^{2H})$ for $t,s \in [0,1]$. The samples are observed at $101$ equispaced points in $[0,1]$. Figures 1, 2 and 3 show the plots of the sample curves along with the empirical coordinatewise median and the empirical spatial median. It is observed from the plots that both the empirical medians are close to the zero function, which is the center of 
symmetry for all three distributions. \\
\indent The real data considered here is the growth acceleration data set, which is derived from the Berkeley growth data. The latter is available in the R package ``fda'' (see \texttt{http://rss.ac s.unt.edu/Rdoc/library/fda/html/growth.html}) and contains two subclasses, namely, the boys and the girls. Heights of $39$ boys and $54$ girls were measured at $31$ time points between ages $1$ and $18$ years. The growth acceleration curves are obtained through monotone spline smoothing available in the R package ``fda'', and those are recorded at $101$ equispaced ages in the interval $[1,18]$ years. Figure \ref{fig4} shows the plots of the acceleration data of the boys and the girls along with the empirical coordinatewise medians and the empirical spatial medians. It is seen from Figure \ref{fig4} that both of the empirical medians are close to the central curves in each data set.

\begin{figure}[htb]
\begin{center}
\includegraphics[height=5.5in,width=2.7in,angle=270]{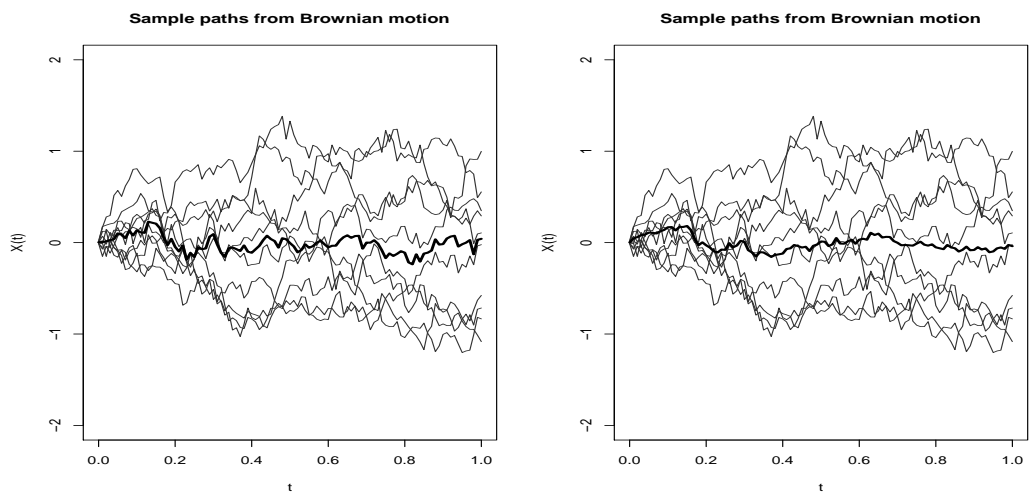} 
\end{center}
\caption{{\it Plots of sample paths from standard Brownian motion along with the empirical coordinatewise median (bold black curve in the left panel) and the empirical spatial median (bold black curve in the right panel).}}
\label{fig1}
\end{figure}

\begin{figure}
\begin{center}
\includegraphics[height=5.5in,width=2.7in,angle=270]{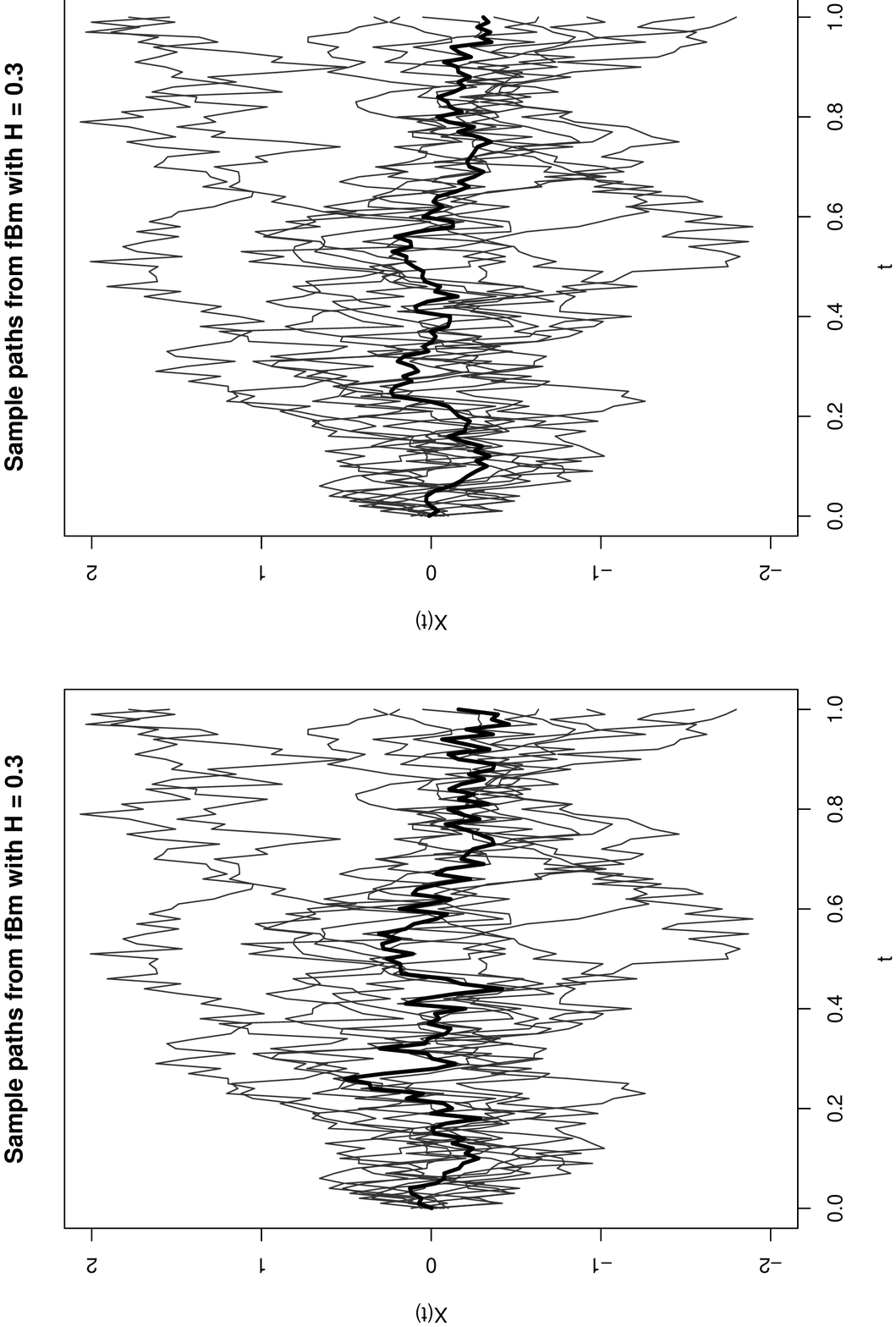} 
\end{center}
\caption{{\it Plots of sample paths from fractional Brownian motion with Hurst index $H = 0.3$ along with the empirical coordinatewise median (bold black curve in the left panel) and the empirical spatial median (bold black curve in the right panel).}}
\label{fig2}
\end{figure}

\begin{figure}
\begin{center}
\includegraphics[height=5.5in,width=2.7in,angle=270]{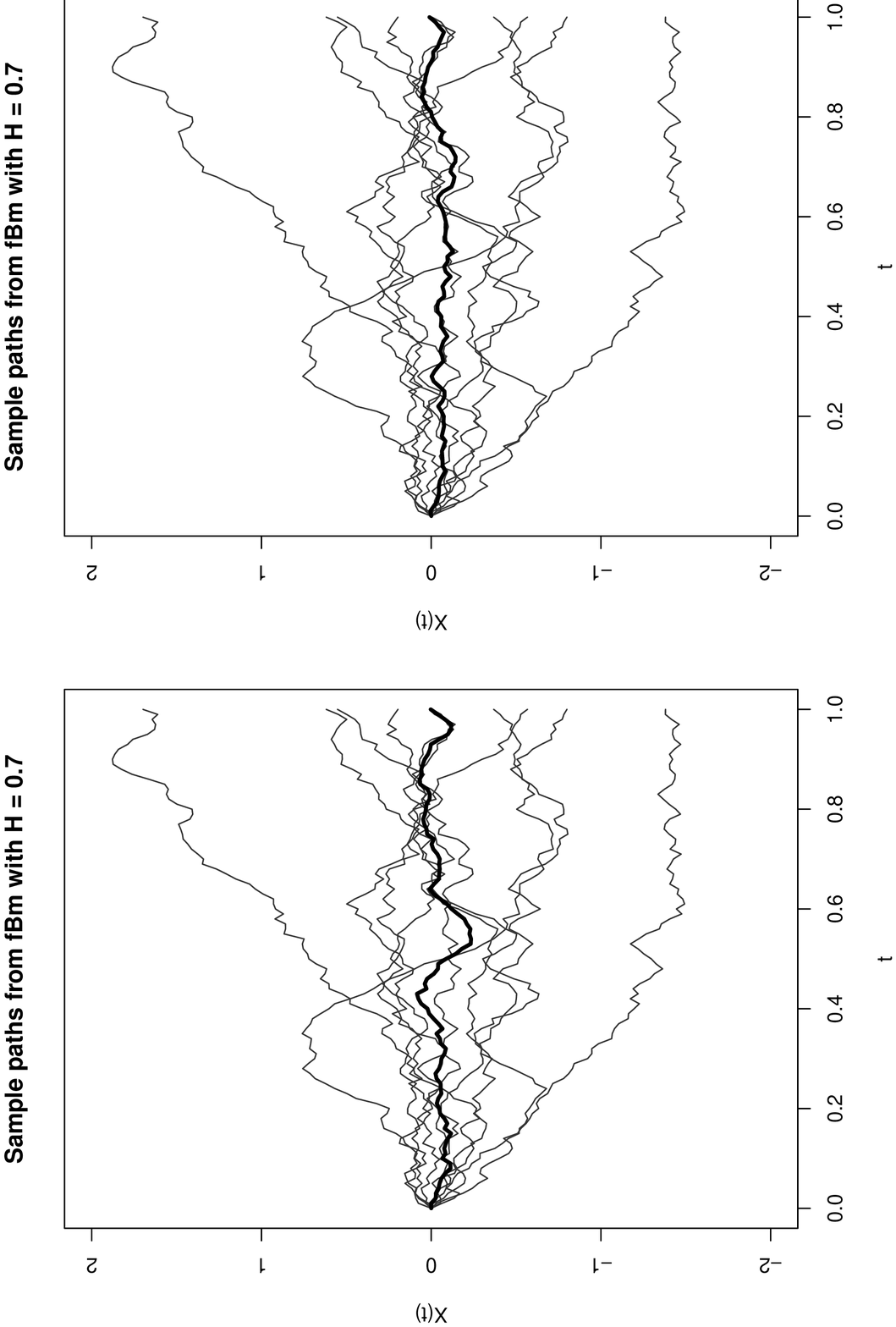} 
\end{center}
\caption{{\it Plots of sample paths from fractional Brownian motion with Hurst index $H = 0.7$ along with the empirical coordinatewise median (bold black curve in the left panel) and the empirical spatial median (bold black curve in the right panel).}}
\label{fig3}
\end{figure}

\begin{figure}
\begin{center}
\includegraphics[height=5.5in,width=3in,angle=270]{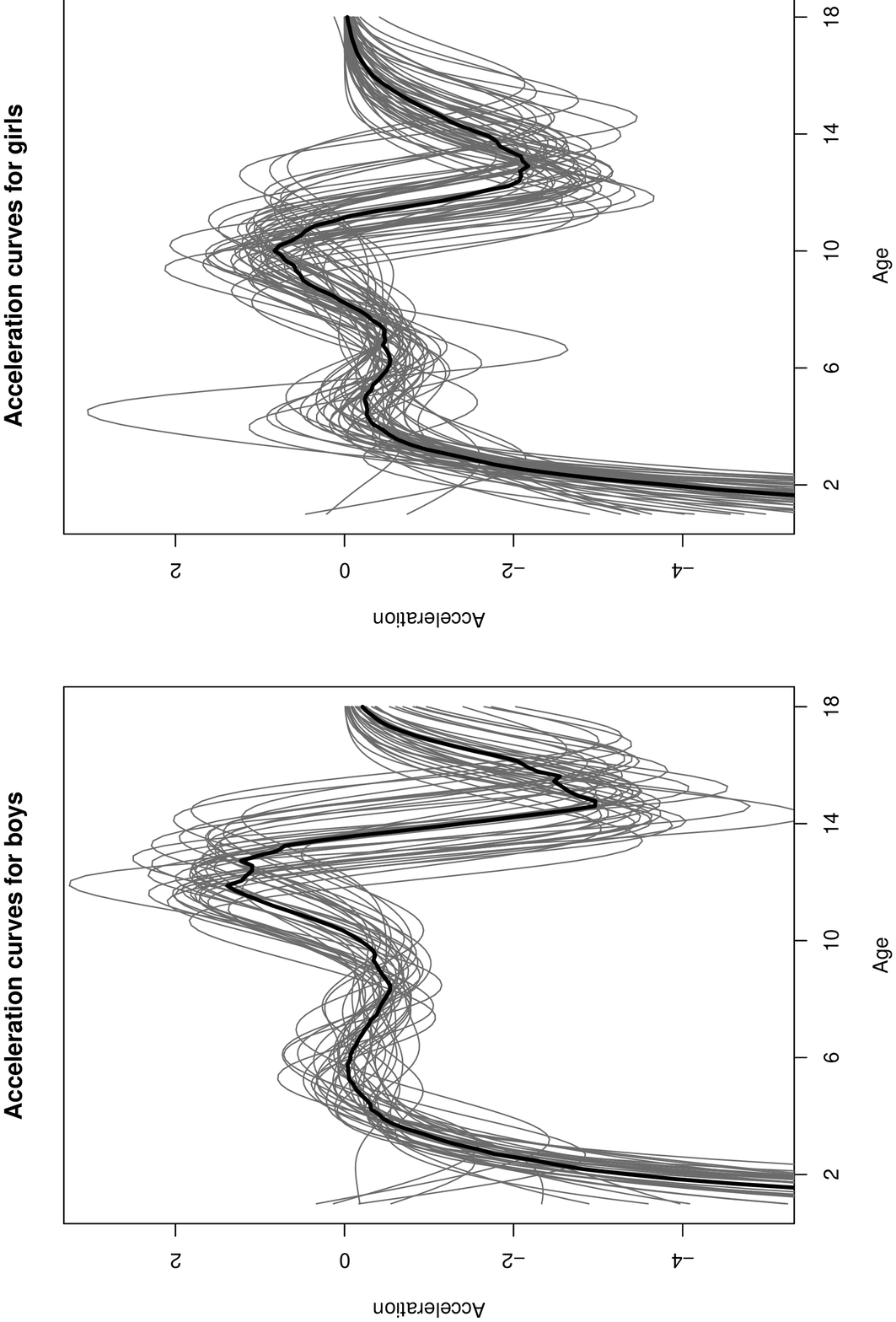} 
\includegraphics[height=5.5in,width=3in,angle=270]{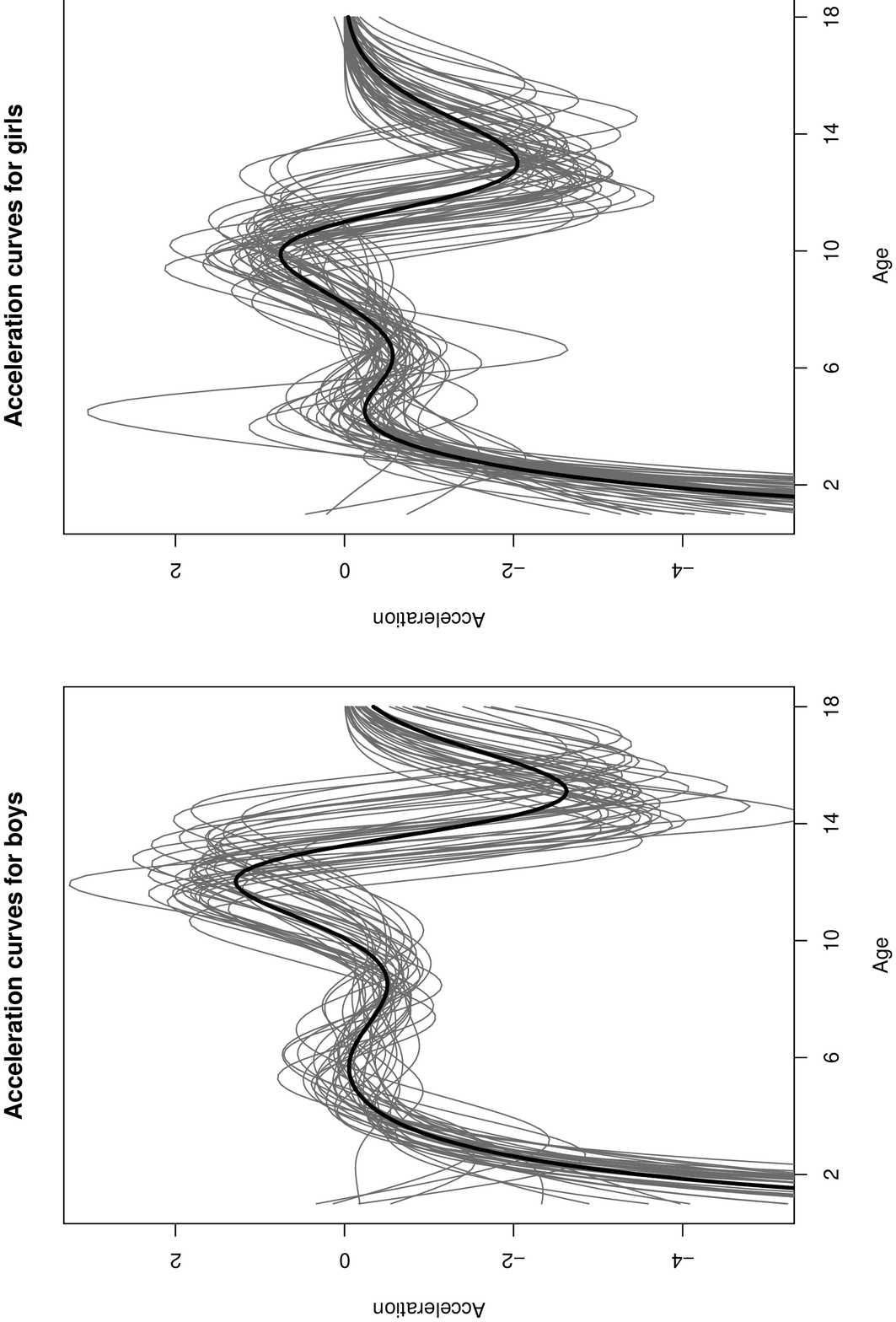}
\end{center}
\caption{{\it Plots of acceleration curves for boys (left panel) and girls (right panel) along with the corresponding empirical coordinatewise medians (bold black curves in the top panel) and the empirical spatial medians (bold black curves in the bottom panel).}}
\label{fig4}
\end{figure}

\section{Geometric and statistical properties of the empirical deepest point}
\label{4}
We shall now discuss the properties of the empirical deepest point mentioned in the previous section. The empirical spatial median is unique if the data points are not concentrated on a line, which follows from Theorem 2.17 in \cite{Kemp87}. Further, if the empirical spatial median is not one of the data points, then using Theorem 4.14 in \cite{Kemp87} it can be verified that the empirical spatial depth of $\widehat{{\bf m}}_{s}$ is $1$, where the empirical spatial depth of ${\bf x}$ is given by $1 - n^{-1} \left\|\sum_{i=1}^{n} \left({\bf x} - {\bf X}_{i}\right)I\left({\bf X}_{i} \neq {\bf x}\right)/\left\|{\bf x} - {\bf X}\right\| \right\|$. In other words, in such situations, the empirical spatial median $\widehat{{\bf m}}_{s}$ is the unique maximizer of the empirical spatial depth.   \\
\indent Let $\widehat{F}_{{\bf t}}$ denote the empirical distribution of $X_{{\bf t}}$ for each ${\bf t} \in I$. Then, the empirical modified band depth of ${\bf x} = \{x_{{\bf t}}\}_{{\bf t} \in I}$ is given by $\sum_{j=2}^{J} \int_{I} \left[1 - \widehat{F}_{{\bf t}}^{j}(x_{{\bf t}}-) - (1 - \widehat{F}_{{\bf t}}(x_{{\bf t}}))^{j}\right] \Lambda(d{\bf t})$. Also, the empirical integrated data depth of ${\bf x}$ is given by $\int_{I} \widehat{D}_{{\bf t}}(x_{{\bf t}}) \Lambda(d{\bf t})$, where $\widehat{D}_{{\bf t}}(x_{{\bf t}})$'s denote the empirical versions of the depths $D_{{\bf t}}$'s. We can assume that $\widehat{D}_{{\bf t}}(x_{{\bf t}})$ is maximized at $\widehat{m}_{{\bf t}}$ for all ${\bf t} \in I$, which is true for almost any depth function for univariate data. The empirical half-region depth of ${\bf x}$ is given by $\min\left\{\int_{I} \widehat{F}_{{\bf t}}(x_{{\bf t}}) \Lambda(d{\bf t}), \ 1 - \int_{I} \widehat{F}_{{\bf t}}(x_{{\bf t}}-) \Lambda(d{\bf t})\right\}$. It can be verified that $\widehat{{\bf m}}_{c}$ is a maximizer of the empirical versions of all of the three depth functions mentioned above if the distribution of $X_{{\bf t}}$ is continuous for all ${\bf t} \in I$. Further, any $\widehat{{\bf m}}^{*}_{c}$, which differ from $\widehat{{\bf m}}_{c}$ only on a $\Lambda$-null set is also a maximizer of the empirical modified band depth and the empirical integrated data depth. Also, there is no unique maximizer of the empirical modified half-region depth, and any $\widehat{{\bf m}}^{**}_{c} = \{\widehat{m}^{**}_{{\bf t}}\}_{{\bf t} \in I}$ satisfying $\int_{I} \widehat{F}_{{\bf t}}(\widehat{m}^{**}_{{\bf t}}) = 1/2$ will be a maximizer of the empirical modified half-region depth like its population counterpart.  \\
\indent It is easy to see that the empirical coordinatewise median is equivariant under any coordinatewise monotone transformation given by ${\bf x} \mapsto \Psi({\bf x})$, where ${\bf x} = \{x_{{\bf t}}\}_{{\bf t} \in I}$, $\Psi({\bf x}) = \{\psi_{{\bf t}}(x_{{\bf t}})\}_{{\bf t} \in I}$, and $\psi_{{\bf t}}$ is a monotone function for each ${\bf t} \in I$. These include location shifts ${\bf x} \mapsto {\bf x} + {\bf c}$, where ${\bf c} \in {\cal F}(I)$ as well as coordinatewise scale transformations ${\bf x} \mapsto {\bf x}_{1}$, where ${\bf x}_{1} = \{a_{{\bf t}}x_{{\bf t}}\}_{{\bf t} \in I}$ and $a_{{\bf t}} > 0$ for each ${\bf t} \in I$. On the other hand, the empirical spatial median is equivariant under location shifts and homogeneous scale transformations ${\bf x} \mapsto a{\bf x}$, where $a > 0$. Moreover, it is also equivariant under linear isometries, i.e., any linear map ${\bf T} : {\cal X} \rightarrow {\cal X}$ such that $||{\bf T}{\bf x}|| = ||{\bf x}||$ for all ${\bf x}$ in the Hilbert space 
${\cal X}$.

\subsection{Robustness properties of the empirical deepest point}
\label{4.1}
It was mentioned in Section \ref{1} that many finite dimensional depth based medians, e.g., the half-space median, the simplicial median, do not have $50\%$ breakdown point unlike the univariate median. We shall now discuss the robustness properties of the two empirical deepest points discussed in Section \ref{3} in terms of their breakdown points. Let us first consider the breakdown point of $\widehat{{\bf m}}_{c}$, and for this we assume that $\int_{I} |X_{{\bf t}}| \Lambda(d{\bf t}) < \infty$ with probability $1$. This assumption holds if $I$ is a compact set, and with probability one, the process ${\bf X}$ has continuous paths(e.g., Brownian motions on a compact interval). Then, it can be shown that $\widehat{{\bf m}}_{c}$ is a minimizer of the function $\int_{I} \sum_{i=1}^{n} |X_{i,{\bf t}} - x_{{\bf t}}| \Lambda(d{\bf t})$ over the set of ${\bf x} \in {\cal F}(I)$, which satisfies $\int_{I} |x_{{\bf t}}| \Lambda(d{\bf t}) < \infty$. Here ${\bf X}_{i} = \{X_{i,{\bf t}}\}_{{\bf t} \in I}$ are the sample observations. In other words, $\widehat{{\bf m}}_{c}$ is an empirical spatial median of the distribution of ${\bf X}$ in the Banach space of real-valued absolutely integrable functions defined on $I$. Hence, from Theorem 2.10 in \cite{Kemp87}, we get that $\widehat{{\bf m}}_{c}$ has $50\%$ breakdown point. The same theorem also implies that the empirical spatial median has $50\%$ breakdown point. The following result summarizes the above observations.
\begin{factt} \label{fact4}
The empirical deepest point $\widehat{{\bf m}}_{s}$ has $50\%$ breakdown point. Further, if the random element ${\bf X} \in {\cal F}(I)$ satisfies $\int_{I} |X_{{\bf t}}| \Lambda(d{\bf t}) < \infty$ with probability $1$, then $\widehat{{\bf m}}_{c}$ also has $50\%$ breakdown point.
\end{factt}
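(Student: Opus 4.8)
The plan is to derive both assertions from one external input, Theorem 2.10 of \cite{Kemp87}, which states that the empirical spatial median in an arbitrary Banach space has breakdown point $50\%$. For $\widehat{{\bf m}}_{s}$ essentially nothing else is required: by definition $\widehat{{\bf m}}_{s}$ minimizes ${\bf x} \mapsto \sum_{i=1}^{n} \|{\bf x} - {\bf X}_{i}\|$ over the Hilbert space ${\cal X}$, and since every Hilbert space is a Banach space, the cited theorem applies directly and yields the $50\%$ breakdown point. (For the finitely many sample points here the objective $\sum_{i=1}^{n}\|{\bf x} - {\bf X}_{i}\|$ is automatically finite, so the subtraction of $\|{\bf X}_{i}\|$ used in the infinite-expectation population case is unnecessary.)

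The entire content of the second assertion is thus to recognize $\widehat{{\bf m}}_{c}$ as an empirical spatial median in an appropriate Banach space, whereupon the same theorem closes the argument. I would take the space to be $L_{1}(I,{\cal I},\Lambda)$ equipped with $\|{\bf f}\|_{1} = \int_{I} |f_{{\bf t}}| \, \Lambda(d{\bf t})$; the hypothesis $\int_{I} |X_{{\bf t}}| \, \Lambda(d{\bf t}) < \infty$ with probability one ensures that the sample paths ${\bf X}_{i}$ lie in this space almost surely, so the spatial-median machinery is available. It then suffices to verify that $\widehat{{\bf m}}_{c}$ minimizes ${\bf x} \mapsto \sum_{i=1}^{n} \|{\bf x} - {\bf X}_{i}\|_{1} = \int_{I} \sum_{i=1}^{n} |X_{i,{\bf t}} - x_{{\bf t}}| \, \Lambda(d{\bf t})$, where the interchange of the finite sum with the integral is justified by nonnegativity of the integrand and Tonelli's theorem. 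The crucial observation is pointwise: for each fixed ${\bf t} \in I$, the real map $c \mapsto \sum_{i=1}^{n} |X_{i,{\bf t}} - c|$ is minimized at the sample median $\widehat{m}_{c,{\bf t}}$, a classical property of the univariate $L_{1}$ objective. Consequently the integrand evaluated at $\widehat{{\bf m}}_{c}$ is, for every ${\bf t}$, no larger than the integrand evaluated at any competitor ${\bf y} \in L_{1}(I,\Lambda)$, and integrating this pointwise inequality shows that $\widehat{{\bf m}}_{c}$ is a global minimizer. This identifies $\widehat{{\bf m}}_{c}$ as an empirical spatial median in $L_{1}(I,\Lambda)$, and Theorem 2.10 of \cite{Kemp87} delivers its $50\%$ breakdown point, now measured in $\|\cdot\|_{1}$.

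The Tonelli interchange and the univariate minimization are routine and I would not dwell on them. Three small points do deserve a line each. First, one must check measurability of ${\bf t} \mapsto \widehat{m}_{c,{\bf t}}$ so that $\widehat{{\bf m}}_{c}$ is a genuine element of $L_{1}(I,\Lambda)$; this follows from the joint measurability of the order statistics of $(X_{1,{\bf t}},\ldots,X_{n,{\bf t}})$ in ${\bf t}$. Second, the conventional even-$n$ average used to define $\widehat{m}_{c,{\bf t}}$ is itself a minimizer of the univariate $L_{1}$ objective (it lies in the minimizing interval), so the identification goes through for the specific estimator under study. Third, although $L_{1}(I,\Lambda)$ is not strictly convex and hence its spatial median need not be unique, Theorem 2.10 concerns only the breakdown point and imposes no convexity requirement, so non-uniqueness is harmless. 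I expect the only real obstacle to be presentational, namely making transparent that the breakdown point, being an intrinsic combinatorial property of how many corrupted observations are needed to force $\|\widehat{{\bf m}}_{c}\|_{1} \to \infty$, transports legitimately from the abstract spatial-median framework of \cite{Kemp87} once the minimization identity above is in hand.
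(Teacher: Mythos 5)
Your proposal is correct and follows essentially the same route as the paper: the spatial median case is handled by a direct appeal to Theorem 2.10 of \cite{Kemp87}, and the coordinatewise median case is handled by identifying $\widehat{{\bf m}}_{c}$ as an empirical spatial median in the Banach space $L_{1}(I,\Lambda)$ (via Tonelli and the univariate $L_{1}$-minimizing property of the sample median) and then invoking the same theorem. Your additional remarks on measurability, the even-$n$ convention, and the harmlessness of non-uniqueness in $L_{1}$ fill in details the paper leaves implicit under ``it can be shown.''
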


\subsection{Asymptotic consistency of empirical deepest points}
\label{4.2}
We shall now investigate the strong consistency of the empirical deepest points. The following result asserts uniform strong consistency of the empirical coordinatewise median, where the uniformity is over a subset of $I$ the size of which grows with the sample size at an appropriate rate.
\begin{factt}   \label{fact2}
Suppose that $X_{{\bf t}}$ has a density $f_{{\bf t}}$ in a neighbourhood of $m_{{\bf t}}$ for each ${\bf t} \in I$. Assume that for some $c_{0}, \eta_{0} > 0$, we have $\inf_{|x-m_{{\bf t}_{k}}| < \eta_{0}} f_{{\bf t}_{k}}(x) > c_{0}$ for all $1 \leq k \leq d_{n}$ and $d_{n} \geq 1$. Then, if $log(d_{n}) = o(n)$ as $n \rightarrow \infty$, we have $\sup_{1 \leq k \leq d_{n}} |\widehat{m}_{{\bf t}_{k}} - m_{{\bf t}_{k}}| \rightarrow 0$ as $n \rightarrow \infty$ almost surely.
\end{factt}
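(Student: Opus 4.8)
The plan is to reduce the statement about the empirical medians to a statement about uniform deviations of the empirical distribution functions $\widehat{F}_{{\bf t}_k}$ from the true distribution functions $F_{{\bf t}_k}$ at the fixed evaluation points $m_{{\bf t}_k} \pm \epsilon$, and then to control those deviations simultaneously over all $d_n$ coordinates by combining an exponential concentration bound with a union bound. The growth condition $\log(d_n) = o(n)$ is precisely what is needed to make the resulting probability bound summable.

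First I would fix $\epsilon \in (0,\eta_0)$. Because $X_{{\bf t}_k}$ has a continuous distribution with $f_{{\bf t}_k} > c_0$ on the interval $(m_{{\bf t}_k} - \eta_0,\, m_{{\bf t}_k} + \eta_0)$, integrating the density gives $F_{{\bf t}_k}(m_{{\bf t}_k} + \epsilon) \ge 1/2 + c_0\epsilon$ and $F_{{\bf t}_k}(m_{{\bf t}_k} - \epsilon) \le 1/2 - c_0\epsilon$, uniformly in $k$; in particular each $m_{{\bf t}_k}$ is the unique median. Next I would translate the event $\{\widehat{m}_{{\bf t}_k} > m_{{\bf t}_k} + \epsilon\}$ into a statement about $\widehat{F}_{{\bf t}_k}$: under the stated median convention this event forces at most $n/2$ of the observations $X_{i,{\bf t}_k}$ to lie at or below $m_{{\bf t}_k} + \epsilon$, so that $\widehat{F}_{{\bf t}_k}(m_{{\bf t}_k} + \epsilon) \le 1/2$ and hence $F_{{\bf t}_k}(m_{{\bf t}_k} + \epsilon) - \widehat{F}_{{\bf t}_k}(m_{{\bf t}_k} + \epsilon) \ge c_0\epsilon$. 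A symmetric argument handles $\{\widehat{m}_{{\bf t}_k} < m_{{\bf t}_k} - \epsilon\}$ through the lower middle order statistic.

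The core estimate then follows from Hoeffding's inequality applied to the i.i.d. Bernoulli variables $I(X_{i,{\bf t}_k} \le m_{{\bf t}_k} + \epsilon)$, which yields
\[
P\left(|\widehat{m}_{{\bf t}_k} - m_{{\bf t}_k}| > \epsilon\right) \le 2\exp\!\big({-}2nc_0^2\epsilon^2\big)
\]
uniformly in $k$. A union bound over the $d_n$ coordinates gives
\[
P\left(\sup_{1 \le k \le d_n} |\widehat{m}_{{\bf t}_k} - m_{{\bf t}_k}| > \epsilon\right) \le 2\,d_n\exp\!\big({-}2nc_0^2\epsilon^2\big) = 2\exp\!\big(\log d_n - 2nc_0^2\epsilon^2\big).
\]
Since $\log d_n = o(n)$, for all large $n$ one has $\log d_n \le nc_0^2\epsilon^2$, so this bound is at most $2\exp(-nc_0^2\epsilon^2)$, which is summable in $n$. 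By the Borel--Cantelli lemma, the event $\{\sup_k |\widehat{m}_{{\bf t}_k} - m_{{\bf t}_k}| > \epsilon\}$ occurs only finitely often almost surely. Taking a sequence $\epsilon_j \downarrow 0$ and intersecting the corresponding null sets then delivers $\sup_{1 \le k \le d_n} |\widehat{m}_{{\bf t}_k} - m_{{\bf t}_k}| \to 0$ almost surely.

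The routine but slightly delicate step is the translation in the second paragraph, where the conventional averaging definition of the sample median for even $n$ must be handled so that the one-sided bound $\widehat{F}_{{\bf t}_k}(m_{{\bf t}_k} + \epsilon) \le 1/2$ still holds; this is resolved by noting that the relevant middle order statistic dominates the sample median. The genuinely essential ingredient, and the reason a mere coordinatewise strong law of large numbers does not suffice, is the exponential rather than polynomial concentration bound: only an exponentially small per-coordinate probability can survive a union bound over a collection whose size $d_n$ grows as fast as $e^{o(n)}$.
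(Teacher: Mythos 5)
Your proof is correct; I checked the steps and each one holds: the margin $F_{{\bf t}_k}(m_{{\bf t}_k}+\epsilon) \ge 1/2 + c_0\epsilon$ and $F_{{\bf t}_k}(m_{{\bf t}_k}-\epsilon) \le 1/2 - c_0\epsilon$ uniformly in $k$ follows from the density bound once one notes $F_{{\bf t}_k}(m_{{\bf t}_k})=1/2$ (continuity near the median); the translation of $\{\widehat{m}_{{\bf t}_k} > m_{{\bf t}_k}+\epsilon\}$ into $\widehat{F}_{{\bf t}_k}(m_{{\bf t}_k}+\epsilon)\le 1/2$ is valid for both parities of $n$, since for even $n$ the averaging convention gives $X_{(n/2+1),{\bf t}_k} \ge \widehat{m}_{{\bf t}_k}$, exactly as you say; and the one-sided Hoeffding bound, union bound over $d_n$ coordinates, and Borel--Cantelli with a sequence $\epsilon_j \downarrow 0$ finish the argument, with $\log(d_n)=o(n)$ entering precisely where you place it. Your route is, however, genuinely different from the paper's, because the paper gives no self-contained proof at all: it simply invokes the arguments in the proof of Corollary 6 of \cite{KM07}, which rest on sharp uniform bounds for the whole marginal empirical processes (uniform over the real argument $x$ as well as over the coordinates) and which in fact deliver a rate of convergence for $\sup_{1\le k\le d_n}|\widehat{m}_{{\bf t}_k}-m_{{\bf t}_k}|$ under the same assumptions. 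Your argument buys self-containedness and economy: you never need control of $\widehat{F}_{{\bf t}_k}$ uniformly in $x$, only its deviations at the two deterministic points $m_{{\bf t}_k}\pm\epsilon$ per coordinate, so elementary Hoeffding concentration suffices. What you give up relative to the cited approach is quantitative information --- a convergence rate in terms of $n$ and $d_n$ --- but the statement of Fact \ref{fact2} asserts only almost sure consistency, which your proof fully establishes.
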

The proof of the above result can be obtained using the arguments in the proof of Corollary 6 in \cite{KM07}. The authors of \cite{KM07} considered the coordinatewise median for high dimensional data when the dimension increases with the sample size. Using sharp uniform bounds on the marginal empirical processes corresponding to the coordinate variables, they obtained the rate of convergence of $\widehat{{\bf m}}_{c}$ under the same set of assumptions used in Fact \ref{fact2} (see Corollary 6 in \cite{KM07}). \\
\indent In practice, the index set $I$ is most often a subset of $\mathbb{R}^{p}$ for some fixed $p \geq 1$, and the process ${\bf X}$ is observed at $d_{n}$ grid points in $I$. We can then construct a functional estimator of ${\bf m}_{c}$ from $\widehat{{\bf m}}_{c}$ as follows. For any point ${\bf t}$ in $I$, which is not a grid point, we can define $\widehat{m}_{{\bf t}}$ by the average of the empirical medians corresponding to its $k$ nearest grid points. Here $k \geq 1$ is a fixed integer. In that case, the uniform consistency of this functional estimator over the whole of $I$ can be derived from Fact \ref{fact2}. For this derivation, let us assume that $I$ is compact, and the grid points become dense in $I$ as $n \rightarrow \infty$. In other words, for each ${\bf t} \in I$, any fixed neighbourhood of ${\bf t}$ will contain infinitely many grid points as $n \rightarrow \infty$. Let us also assume that the population deepest point ${\bf m}_{c}$ is a continuous function on $I$. Then, under the conditions assumed in Fact \ref{fact2}, it is straightforward to show that $\sup_{{\bf t} \in I} |\widehat{m}_{{\bf t}} - m_{{\bf t}}| \rightarrow 0$ as $n \rightarrow \infty$ almost surely. \\
\indent We next consider the strong consistency of the empirical deepest point based on the spatial depth. As mentioned in Section \ref{1}, earlier work on the asymptotic consistency of the empirical spatial median in Hilbert spaces are restricted to showing convergence in the weak topology of the Hilbert space, i.e., convergence of $\langle{\bf h},\widehat{{\bf m}}_{s}\rangle$ to $\langle{\bf h},{\bf m}_{s}\rangle$ as $n \rightarrow \infty$ for each fixed ${\bf h}$ in the Hilbert space (see, e.g., \cite{Cadr01, Gerv08}). The following theorem asserts that the empirical spatial median in fact converges in the norm topology to its population version under very general conditions.
\begin{theoremm}  \label{thm5}
Let ${\bf X}$ be a random element in a strictly convex separable Hilbert space ${\cal X}$, and the distribution of ${\bf X}$ is nonatomic and not entirely supported on a line in ${\cal X}$. Also assume that for every $R > 0$, $\sup_{||{\bf x}|| \leq R} E\{||{\bf x} - {\bf X}||^{-1}\} < \infty$. Then, $||\widehat{{\bf m}}_{s} - {\bf m}_{s}|| \rightarrow 0$ as $n \rightarrow \infty$ almost surely.
\end{theoremm}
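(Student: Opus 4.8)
The plan is to run a convexity/$M$-estimation argument, but carried out in the weak topology so as to avoid the noncompactness of the unit ball, and then to upgrade to norm convergence using the Hilbert-space structure. Write $S(\mathbf{u}) = \mathbf{u}/\|\mathbf{u}\|$ for the spatial sign, and set $G(\mathbf{x}) = E\{\|\mathbf{x}-\mathbf{X}\| - \|\mathbf{X}\|\}$ and $G_{n}(\mathbf{x}) = n^{-1}\sum_{i}(\|\mathbf{x}-\mathbf{X}_{i}\| - \|\mathbf{X}_{i}\|)$; these are the centered objectives minimized by $\mathbf{m}_{s}$ and $\widehat{\mathbf{m}}_{s}$, respectively. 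Each is convex and $1$-Lipschitz, and each summand $\|\mathbf{x}-\mathbf{X}_{i}\|-\|\mathbf{X}_{i}\|$ is bounded by $\|\mathbf{x}\|$, hence integrable even when $E\|\mathbf{X}\|=\infty$. By Fact \ref{fact1}, $\mathbf{m}_{s}$ is the unique minimizer of $G$ and satisfies $\Psi(\mathbf{m}_{s}) = \mathbf{0}$, where $\Psi(\mathbf{x}) = E\{S(\mathbf{x}-\mathbf{X})\}$; the assumption $\sup_{\|\mathbf{x}\|\le R} E\{\|\mathbf{x}-\mathbf{X}\|^{-1}\} < \infty$ is what guarantees that $G$ is Fr\'echet differentiable with derivative $\Psi$, so the subgradient manipulations below are legitimate. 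First I would fix a countable dense set $D \subset \mathcal{X}$ and invoke the SLLN at each point of $D$, together with Mourier's strong law for the bounded i.i.d. elements $S(\mathbf{x}-\mathbf{X}_{i})$, to obtain a single probability-one event $\Omega_{0}$ on which $G_{n}(\mathbf{x}) \to G(\mathbf{x})$ and $\Psi_{n}(\mathbf{x}) \to \Psi(\mathbf{x})$ (in norm) for every $\mathbf{x}\in D$; equi-Lipschitzness of the $G_{n}$ then promotes this to uniform convergence on compact sets.

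Next I would establish, on $\Omega_{0}$, that $\{\widehat{\mathbf{m}}_{s}\}$ is bounded. Since every Borel probability measure on a separable Hilbert space is tight, choose a compact $K$ with $P(\mathbf{X}\in K)$ close to $1$ and $\sup_{K}\|\cdot\| \le c_{K}$. A crude bound splitting $G_{n}(\mathbf{x})$ over $\{\mathbf{X}_{i}\in K\}$ and its complement, combined with the elementary LLN for the single set $K$, shows that $G_{n}(\mathbf{x}) \ge \tfrac{1}{2}\|\mathbf{x}\| - C_{K}$ for all large $n$, while $G_{n}(\widehat{\mathbf{m}}_{s}) \le G_{n}(\mathbf{0}) = 0$; hence $\|\widehat{\mathbf{m}}_{s}\| \le 2C_{K}$ eventually. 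By reflexivity any subsequence of $\{\widehat{\mathbf{m}}_{s}\}$ then has a further subsequence with $\widehat{\mathbf{m}}_{s}\rightharpoonup \mathbf{w}$ weakly. To identify $\mathbf{w}=\mathbf{m}_{s}$ I would use, for each fixed $\mathbf{z}\in D$, the support inequality $G_{n}(\widehat{\mathbf{m}}_{s}) \ge G_{n}(\mathbf{z}) + \langle \Psi_{n}(\mathbf{z}), \widehat{\mathbf{m}}_{s}-\mathbf{z}\rangle$; letting $n\to\infty$ (the inner product converges because $\Psi_{n}(\mathbf{z})\to\Psi(\mathbf{z})$ strongly while $\widehat{\mathbf{m}}_{s}-\mathbf{z}\rightharpoonup \mathbf{w}-\mathbf{z}$ weakly) and taking the supremum over $\mathbf{z}\in D$ gives $\liminf_{n} G_{n}(\widehat{\mathbf{m}}_{s}) \ge G(\mathbf{w})$. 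On the other hand $G_{n}(\widehat{\mathbf{m}}_{s}) \le G_{n}(\mathbf{m}_{s}) \to G(\mathbf{m}_{s})$, so $G(\mathbf{w}) \le G(\mathbf{m}_{s})$, whence $\mathbf{w}=\mathbf{m}_{s}$ by uniqueness. Thus $\widehat{\mathbf{m}}_{s}\rightharpoonup \mathbf{m}_{s}$ a.s.

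The hard part will be upgrading weak convergence to norm convergence: the noncompactness of the closed unit ball means there is neither a uniform law of large numbers over $\{\mathbf{x}:\|\mathbf{x}\|\le R\}$ nor a uniform modulus of strong convexity, so weak convergence does not by itself force $\|\widehat{\mathbf{m}}_{s}-\mathbf{m}_{s}\|\to 0$. My device to overcome this is an operator law of large numbers together with an eigenvalue gap coming from the ``not supported on a line'' hypothesis. Writing $\mathbf{u}_{n} = \widehat{\mathbf{m}}_{s}-\mathbf{m}_{s}$, the optimality condition $\Psi_{n}(\widehat{\mathbf{m}}_{s}) = \mathbf{0}$ (up to an $O(1/n)$ correction if $\widehat{\mathbf{m}}_{s}$ is a tie, which has probability zero since the law is nonatomic) and $\langle \Psi_{n}(\mathbf{m}_{s}),\mathbf{u}_{n}\rangle \to 0$ (strong times bounded) give
\[
A_{n} := \langle \Psi_{n}(\widehat{\mathbf{m}}_{s}) - \Psi_{n}(\mathbf{m}_{s}), \mathbf{u}_{n}\rangle \longrightarrow 0, \qquad A_{n}\ge 0 .
\]
A pointwise transverse lower bound for the monotone map $S$ yields, for some universal $c_{0}>0$, $A_{n} \ge c_{0}\, n^{-1}\sum_{i}\big(\|\mathbf{u}_{n}\|^{2}-\langle \mathbf{u}_{n}, S(\mathbf{m}_{s}-\mathbf{X}_{i})\rangle^{2}\big)/(\|\widehat{\mathbf{m}}_{s}-\mathbf{X}_{i}\| + \|\mathbf{m}_{s}-\mathbf{X}_{i}\|)$.

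Restricting the sum to indices with $\mathbf{X}_{i}\in K$ bounds the denominators by a constant $D$, so with $\mathbf{e}_{n} = \mathbf{u}_{n}/\|\mathbf{u}_{n}\|$ I obtain $A_{n} \ge (c_{0}\|\mathbf{u}_{n}\|^{2}/D)\,\{p_{n} - \langle \mathbf{e}_{n}, \Sigma_{n}\mathbf{e}_{n}\rangle\}$, where $p_{n} = n^{-1}\#\{i:\mathbf{X}_{i}\in K\} \to P(\mathbf{X}\in K) =: p$ and $\Sigma_{n} = n^{-1}\sum_{i:\mathbf{X}_{i}\in K} S(\mathbf{m}_{s}-\mathbf{X}_{i})\otimes S(\mathbf{m}_{s}-\mathbf{X}_{i})$. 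The crucial point is that the $S_{i}\otimes S_{i}$ are i.i.d. Hilbert--Schmidt operators of norm $\le 1$, so Mourier's law gives $\Sigma_{n}\to \Sigma := E[\mathbf{1}_{K}\, S(\mathbf{m}_{s}-\mathbf{X})\otimes S(\mathbf{m}_{s}-\mathbf{X})]$ in operator norm a.s.; this replaces the unavailable uniform control over the noncompact sphere of directions $\mathbf{e}_{n}$. Since the law of $\mathbf{X}$ is not supported on a line, $\lambda_{\max}(E[S\otimes S]) < 1$, and choosing $K$ with $p > \lambda_{\max}(E[S\otimes S]) \ge \lambda_{\max}(\Sigma)$ forces $\langle \mathbf{e}_{n},\Sigma_{n}\mathbf{e}_{n}\rangle \le \lambda_{\max}(\Sigma) + \|\Sigma_{n}-\Sigma\|_{\mathrm{op}}$ for \emph{every} unit $\mathbf{e}_{n}$. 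Consequently, if $\|\mathbf{u}_{n}\|\ge\varepsilon$ along a subsequence, then $A_{n} \ge (c_{0}\varepsilon^{2}/D)(p-\lambda_{\max}(\Sigma)) + o(1) > 0$, contradicting $A_{n}\to 0$. Hence $\|\widehat{\mathbf{m}}_{s}-\mathbf{m}_{s}\|\to 0$ almost surely, which is the assertion; strict convexity of $\mathcal{X}$ enters through the uniqueness of $\mathbf{m}_{s}$ used in the identification step.
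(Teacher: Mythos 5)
Your proof is correct in substance, but it follows a genuinely different route from the paper's. The paper's argument is second-order and leans on two citations: it gets $g(\widehat{\mathbf{m}}_{s}) \rightarrow g(\mathbf{m}_{s})$ a.s. from Lemma 2.1(i) of \cite{Cadr01}, expands $g$ to second order about $\mathbf{m}_{s}$ (the first-order term vanishing by Kemperman's characterization of the spatial median), and then invokes the strong convexity inequality $\langle J_{\mathbf{x}}(\mathbf{h}),\mathbf{h}\rangle \geq c_{A}\|\mathbf{h}\|^{2}$ on bounded balls, quoted from Proposition 2.1 of \cite{CCZ13} and explicitly flagged as the key step; together with a Hoeffding-based eventual boundedness of $\widehat{\mathbf{m}}_{s}$, this yields $\|\widehat{\mathbf{m}}_{s}-\mathbf{m}_{s}\|^{2} = O\bigl(g(\widehat{\mathbf{m}}_{s})-g(\mathbf{m}_{s})\bigr) \rightarrow 0$. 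You work instead at the first-order (gradient) level: the variational quantity $A_{n}\rightarrow 0$, a pointwise transverse lower bound for spatial signs, Mourier's SLLN for the Hilbert--Schmidt elements $S\otimes S$, and the spectral gap $\lambda_{\max}(E[S\otimes S])<1$ extracted from the ``not supported on a line'' hypothesis. Since the integrand of the paper's Hessian form is exactly your transverse quantity, you are in effect re-proving the cited strong convexity result, but at the empirical level, with operator-norm convergence of $\Sigma_{n}$ playing precisely the role that compactness of the sphere of directions plays in finite dimensions. What your route buys: it is self-contained (no appeal to \cite{CCZ13} or \cite{Cadr01}), and your main argument never actually uses the hypothesis $\sup_{\|\mathbf{x}\|\leq R}E\{\|\mathbf{x}-\mathbf{X}\|^{-1}\}<\infty$ --- the paper needs it for the Hessian to exist, while you need it only in the weak-convergence section --- so you prove a slightly stronger statement. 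What the paper's route buys: brevity, and a reusable population-level bound $g(\mathbf{x})-g(\mathbf{m}_{s}) \geq c\|\mathbf{x}-\mathbf{m}_{s}\|^{2}$.

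Three points to repair, none fatal. First, the transverse inequality is asserted, not proved; it does hold with $c_{0}=1/2$: writing $a=\widehat{\mathbf{m}}_{s}-\mathbf{X}_{i}$, $b=\mathbf{m}_{s}-\mathbf{X}_{i}$, $u=a-b$ and $\theta$ for the angle between $a$ and $b$, one has $\langle S(a)-S(b),u\rangle = (\|a\|+\|b\|)(1-\cos\theta)$, while $\|u\|^{2}-\langle u,S(b)\rangle^{2} = \|a\|^{2}\sin^{2}\theta \leq 2\|a\|^{2}(1-\cos\theta) \leq 2(\|a\|+\|b\|)^{2}(1-\cos\theta)$. Second, your claim that $\widehat{\mathbf{m}}_{s}$ coincides with a data point ``with probability zero'' is false in general: for the geometric median of finitely many points this event has positive probability (e.g.\ a sufficiently obtuse triangle in the plane). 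The proof survives because the subgradient optimality condition at a data point gives exactly the $O(1/n)$ correction you allow for, namely $\|\Psi_{n}(\widehat{\mathbf{m}}_{s})\|\leq 1/n$ with the sum taken over $\mathbf{X}_{i}\neq\widehat{\mathbf{m}}_{s}$, and dropping that single summand from the transverse bound costs only $O(1/n)$; but the justification should be stated this way, not via a probability-zero claim. Third, your entire weak-compactness/identification paragraph is dead weight: the final contradiction uses only eventual boundedness, $A_{n}\rightarrow 0$, and the spectral gap, so the weak subsequential limits (and the Fr\'echet differentiability of $G$ they require) can simply be deleted.
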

The assumption $\sup_{||{\bf x}|| \leq R} E\{||{\bf x} - {\bf X}||^{-1}\} < \infty$ in the above theorem holds under very general circumstances. Since ${\cal X}$ is a separable Hilbert space, any ${\bf x} \in {\cal X}$ can be written as ${\bf x} = \sum_{k=1}^{\infty} x_{k}\phi_{k}$ for an orthonormal basis $\{\phi_{k}\}_{k \geq 1}$ of ${\cal X}$. Let ${\bf X} = \sum_{k=1}^{\infty} X_{k}\phi_{k}$. If some two dimensional marginal of $(x_{1} - X_{1},x_{2} - X_{2},\ldots)$ has a density that is bounded on bounded subsets of $\mathbb{R}^{2}$, then the aforementioned assumption holds.
\begin{proof}[Proof of Theorem \ref{thm5}]
Let us first note that the assumptions in the statement of the theorem ensure that the spatial median is unique, and is the deepest point based on the spatial depth function (see Fact \ref{fact1} in Section \ref{2}). Recall that the spatial median ${\bf m}_{s}$ is the minimizer of the function $g({\bf x}) = E\{||{\bf x} - {\bf X}|| - ||{\bf X}||\}$. It follows from Lemma 2.1(i) in \cite{Cadr01} that $g(\widehat{{\bf m}}_{s}) \rightarrow g({\bf m}_{s})$ as $n \rightarrow \infty$ almost surely. Let us denote the Hessian of $g$ at ${\bf x}$ by $J_{{\bf x}} : {\cal X} \rightarrow {\cal X}$. It is a continuous linear operator on ${\cal X}$, and it is given by  
\begin{eqnarray*}
\langle J_{{\bf x}}({\bf z}),{\bf w}\rangle &=& E\left\{ \frac{\langle{\bf z},{\bf w}\rangle}{||{\bf x} - {\bf X}||} - \frac{\langle{\bf z},{\bf x} - {\bf X}\rangle \langle{\bf w},{\bf x} - {\bf X}\rangle}{||{\bf x} - {\bf X}||^{3}} \right\},
\end{eqnarray*}
where ${\bf z}, {\bf w} \in {\cal X}$. A second order Taylor expansion of the function $\Phi(v) = g({\bf m}_{s} + v{\bf h})$, where ${\bf h} = ({\bf x} - {\bf m}_{s})/||{\bf x} - {\bf m}_{s}||$, at $v = ||{\bf x} - {\bf m}_{s}||$ about $0$ yields
\begin{eqnarray}
g({\bf x}) - g({\bf m}_{s}) &=& E\left\{\frac{\langle{\bf m}_{s} - {\bf X},{\bf x} - {\bf m}_{s}\rangle}{||{\bf m}_{s} - {\bf X}||}\right\} + \frac{1}{2}\langle J_{{\bf y}}({\bf x} - {\bf m}_{s}),{\bf x} - {\bf m}_{s}\rangle,  \nonumber \\
&& \mbox{where $||{\bf y} - {\bf m}_{s}|| < ||{\bf x} - {\bf m}_{s}||$}  \label{eq5.2.1} \\
&=&  \frac{1}{2}\langle J_{{\bf y}}({\bf x} - {\bf m}_{s}),{\bf x} - {\bf m}_{s}\rangle.  \label{eq5.2.2} 
\end{eqnarray}
The last equality holds because $E\{({\bf m}_{s} - {\bf X})/||{\bf m}_{s} - {\bf X}||\} = {\bf 0}$, which follows from the nonatomicity of ${\bf X}$ and Theorem 4.14 in \cite{Kemp87}. It has been shown (see Proposition 2.1 in \cite{CCZ13}) that the function $g$ is strongly convex for all ${\bf x}$ in any closed and bounded ball around the origin. In other words, for each $A > 0$, there exists $c_{A} > 0$ such that
\begin{eqnarray}
 \langle J_{{\bf x}}({\bf h}),{\bf h}\rangle \geq c_{A},   \label{eq5.2.3}
\end{eqnarray}
for every $||{\bf x}|| \leq A$ and $||{\bf h}|| = 1$. This inequality is the key argument in the proof. In the finite dimensional setup, the above inequality is obtained using the compactness of the the sets $\{{\bf x} : ||{\bf x}|| \leq A\}$ and $\{{\bf h} : ||{\bf h}|| = 1\}$ along with the positive definiteness of the Hessian $J_{x}$. It is remarkable that even when we do not have the compactness of closed and bounded balls, inequality (\ref{eq5.2.3}) holds. Note that (\ref{eq5.2.3}) is equivalent to $\langle J_{{\bf x}}({\bf h}),{\bf h}\rangle \geq c_{A}||{\bf h}||^{2}$ for ${\bf h} \in {\cal X}$. Since for any $\epsilon > 0$, there exists $M > 0$ such that $P(||{\bf X} - {\bf m}_{s}|| > M) < \epsilon$, by Hoeffding's inequality we get that
\begin{eqnarray*}
\frac{1}{n} \sum_{i=1}^{n} I(||{\bf X}_{i} - {\bf m}_{s}|| > M) \leq 2\epsilon \ \mbox{as} \ n \rightarrow \infty \ \mbox{almost surely}.
\end{eqnarray*}
So, if $||{\bf x} - {\bf m}_{s}|| > 4M$, we have
\begin{eqnarray}
&& \frac{1}{n} \sum_{i=1}^{n} ||{\bf x} - {\bf X}_{i}||  \nonumber \\
&\geq& \frac{1}{n} \sum_{i=1}^{n} (||{\bf x} - {\bf m}_{s}|| - ||{\bf m}_{s} - {\bf X}_{i}||)I(||{\bf m}_{s} - {\bf X}_{i}|| \leq M) \nonumber \\
&& \ + \ \frac{1}{n} \sum_{i=1}^{n} (||{\bf m}_{s} - {\bf X}_{i}|| - ||{\bf x} - {\bf m}_{s}||)I(||{\bf m}_{s} - {\bf X}_{i}|| > M)  \nonumber \\
&>& \frac{(1-2\epsilon)}{2}||{\bf x} - {\bf m}_{s}|| + \frac{1}{n} \sum_{i=1}^{n} ||{\bf m}_{s} - {\bf X}_{i}|| I(||{\bf m}_{s} - {\bf X}_{i}|| \leq M)  \nonumber \\
&& \ + \frac{1}{n} \sum_{i=1}^{n} ||{\bf m}_{s} - {\bf X}_{i}|| I(||{\bf m}_{s} - {\bf X}_{i}|| > M) - 2\epsilon||{\bf x} - {\bf m}_{s}|| \nonumber \\
&=& \frac{(1-6\epsilon)}{2}||{\bf x} - {\bf m}_{s}|| + \frac{1}{n} \sum_{i=1}^{n} ||{\bf m}_{s} - {\bf X}_{i}|| \ > \ \frac{1}{n} \sum_{i=1}^{n} ||{\bf m}_{s} - {\bf X}_{i}||  \label{eq5.2.0}
\end{eqnarray}
if $\epsilon < 1/6$. Since $\widehat{{\bf m}}_{s}$ is the minimizer of $n^{-1} \sum_{i=1}^{n} ||{\bf x} - {\bf X}_{i}||$, there exists $R > 0$ such that $P(||\widehat{{\bf m}}_{s} - {\bf m}_{s}|| \leq R \ \mbox{as} \ n \rightarrow \infty) = 1$. This inequality along with (\ref{eq5.2.1}) and (\ref{eq5.2.3}) imply that
\begin{eqnarray}
\langle J_{{\bf y}}(\widehat{{\bf m}}_{s} - {\bf m}_{s}),\widehat{{\bf m}}_{s} - {\bf m}_{s}\rangle \geq c_{R}||\widehat{{\bf m}}_{s} - {\bf m}_{s}||^{2}   \label{eq5.2.4}
\end{eqnarray}
for some $c_{R} > 0$ as $n \rightarrow \infty$ almost surely. Since $g(\widehat{{\bf m}}_{s}) \rightarrow g({\bf m}_{s})$ as $n \rightarrow \infty$ almost surely, (\ref{eq5.2.2}) and (\ref{eq5.2.4}) together now imply that $||\widehat{{\bf m}}_{s} - {\bf m}_{s}|| \rightarrow 0$ as $n \rightarrow \infty$ almost surely. This completes the proof of the theorem.
\end{proof}
The strong convergence of the empirical spatial median in the norm topology is not restricted to separable Hilbert spaces only. For a class of Banach spaces, which include $L_{p}$ spaces for $1 < p < \infty$, it can be shown that the empirical spatial median converges in the norm topology to its population counterpart. As noted in the proof of the previous theorem, the key requirements there are the Frechet differentiability and the strong convexity of $g$ at ${\bf m}_{s}$. In other words, we need
\begin{eqnarray*}
g({\bf x}) - g({\bf m}_{s}) \geq g_{1}({\bf m}_{s})({\bf x} - {\bf m}_{s}) + \phi(||{\bf x} - {\bf m}_{s}||),
\end{eqnarray*}
where $g_{1}({\bf m}_{s}) \in {\cal X}^{*}$ is the Frechet derivative of $g$ at ${\bf m}_{s}$, and $\phi : [0,\infty) \rightarrow [0,\infty)$ is a convex lower semicontinuous function such that $\phi(0) = 0$ and $\phi(t) > 0$ if $t > 0$. Here ${\cal X}^{*}$ is the dual space of the Banach space ${\cal X}$. When ${\cal X}$ is a separable Hilbert space, $g_{1}({\bf x}) = E\left\{({\bf m}_{s} - {\bf X})/||{\bf m}_{s} - {\bf X}||\right\}$ and $\phi(t) = ct^{2}$ for some $c > 0$. The strong convexity of $g$ in a class of separable Banach spaces, which includes $L_{p}$ spaces for $1 < p < \infty$, follows from Proposition 1 and Theorem 3 in \cite{Aspl68}. Then, using similar arguments as in the proof of Theorem \ref{thm5}, we get that $||\widehat{{\bf m}}_{s} - {\bf m}_{s}|| \rightarrow 0$ as $n \rightarrow \infty$ almost surely in such spaces.

\bibliographystyle{model1b-num-names}
\bibliography{biblio-file.bib}

\end{document}